\newtheorem{theorem}{Theorem}[section]
\newtheorem{lemma}[theorem]{Lemma}
\newtheorem{proposition}[theorem]{Proposition}
\newtheorem{corollary}[theorem]{Corollary}
\theoremstyle{definition}
\newtheorem{example}[theorem]{Example}
\newtheorem{algorithm}[theorem]{Algorithm}
\theoremstyle{remark}
\newtheorem{remark}[theorem]{Remark}
\numberwithin{equation}{section}
\begin{document}

\title{A combinatorial problem and numerical semigroups}

\author{Aureliano M. Robles-P\'erez\thanks{Both of the authors are supported by FQM-343 (Junta de Andaluc\'{\i}a), MTM2014-55367-P (MINECO, Spain), and FEDER funds. The second author is also partially supported by Junta de Andaluc\'{\i}a/FEDER Grant Number FQM-5849.} \thanks{Departamento de Matem\'atica Aplicada, Universidad de Granada, 18071-Granada, Spain. \newline E-mail: \textbf{arobles@ugr.es}}
	\mbox{ and} Jos\'e Carlos Rosales$^*$\thanks{Departamento de \'Algebra, Universidad de Granada, 18071-Granada, Spain. \newline E-mail: \textbf{jrosales@ugr.es}} }

\date{ }

\maketitle

\begin{abstract}
Let $a=(a_1,\ldots,a_n)$ and $b=(b_1,\ldots,b_n)$ be two $n$-tuples of positive integers, let $X$ be a set of positive integers, and let $g$ be a positive integer. In this work we show an algorithmic process in order to compute all the sets $C$ of positive integers that fulfill the following conditions:
\begin{enumerate}
	\item the cardinality of $C$ is equal to $g$;
	\item if $x,y\in \mathbb{N} \setminus \{0\}$ and $x+y\in C$, then $C \cap \{x,y\} \neq \emptyset$;
	\item if $x \in C$ and $\frac{x-b_i}{a_i} \in \mathbb{N} \setminus \{0\}$ for some $i\in \{1,\ldots,n\}$, then $\frac{x-b_i}{a_i} \in C$;
	\item $X \cap C = \emptyset$. 
\end{enumerate}	
	
\end{abstract}
\noindent \textbf{Keywords:} combinatorial problems, numerical semigroups, Frobenius pseudo-varieties.

\medskip

\noindent \textit{2010 Mathematics Subject Classification:} 11B75, 05A99, 20M14.

\section{Introduction}

In certain lottery game, we have to guess six (positive) numbers to win the main prize. By analysing the results of different draws, it has been observed that some patterns happen frequently. In effect, if $C$ is a winner combination, then is rather probable that one of the following conditions is fulfilled.
\begin{itemize}
	\item[] \hspace{-20pt} $(C1)$ If $x,y$ are positive integers such that $x+y\in C$, then $C \cap \{x,y\} \neq \emptyset$.
	\item[] \hspace{-20pt} $(C2)$ If $x \in C$ and $x-4$ is a positive integer, then $x-4 \in C$.
	\item[] \hspace{-20pt} $(C3)$ If $x \in C$ and $\frac{x-1}{2}$ is a positive integer, then $\frac{x-1}{2} \in C$.
	\item[] \hspace{-20pt} $(C4)$ $5 \not\in C$.
\end{itemize}
The question naturally arises as to whether a combination fulfilling the four conditions simultaneously exists. The purpose of this work will be to give an answer to this type of combinatorial problems.

Let $\mathbb{Z}$ and $\mathbb{N}$ be the sets of integers and non-negative integers, respectively. Let $a=(a_1,\ldots,a_n)$ and $b=(b_1,\ldots,b_n)$ be two $n$-tuples (with $n\geq 1$) of positive integers, let $X$ be a non-empty subset of $\mathbb{N} \setminus \{0\}$, and let $g$ be a positive integer. Let us denote by $\mathrm{P}(a,b,X,g)$ the problem of computing all the subsets $C$ of $\mathbb{N} \setminus \{0\}$ that fulfill the following conditions.
\begin{itemize}
	\item[] \hspace{-20pt} $(P1)$ The cardinality of $C$ is equal to $g$.
	\item[] \hspace{-20pt} $(P2)$ If $x,y\in \mathbb{N} \setminus \{0\}$ and $x+y\in C$, then $C \cap \{x,y\} \neq \emptyset$.
	\item[] \hspace{-20pt} $(P3)$ If $x \in C$ and $\frac{x-b_i}{a_i} \in \mathbb{N} \setminus \{0\}$ for some $i\in \{1,\ldots,n\}$, then $\frac{x-b_i}{a_i} \in C$.
	\item[] \hspace{-20pt} $(P4)$ $X \cap C = \emptyset$. 
\end{itemize}

With the previous notation, we observe that the problem proposed at the beginning is just $\mathrm{P}((1,2),(4,1),\{5\},6)$.

A \emph{numerical semigroup} (see \cite{springer}) is a submonoid $S$ of $(\mathbb{N},+)$ such that $\mathbb{N} \setminus S$ is finite. The cardinality of $\mathbb{N} \setminus S$ is so-called the \emph{genus} of $S$ and is denoted by $\mathrm{g}(S)$.

It is easy to see that $C$ is a solution of $\mathrm{P}(a,b,X,g)$ if and only if $S= \mathbb{N} \setminus C$ is a numerical semigroup that fulfills the following conditions.
\begin{itemize}
	\item[] \hspace{-20pt} $(S1)$ $\mathrm{g}(S)=g$.
	\item[] \hspace{-20pt} $(S2)$ If $s \in S \setminus \{0\}$, then $as+b \in S^n$ (where $as+b=(a_1s+b_1,\ldots,a_ns+b_n))$.
	\item[] \hspace{-20pt} $(S3)$ $X \subseteq S$. 
\end{itemize}

Let us denote by $\mathcal{N}(a,b,X)$ the set
$$\left\{ S \mid S \; \mbox{is a numerical semigroup}, \; X \subseteq S, \; \mbox{and} \; as+b\in S^n \; \mbox{for all} \; s \in S \setminus \{0\} \right\}.$$
With this notation, the solutions of $\mathrm{P}(a,b,X,g)$ are the elements of the set $\left\{ \mathbb{N} \setminus S \mid S \in \mathcal{N}(a,b,X) \; \mbox{and} \; \mathrm{g}(S)=g \right\}$.

Let $S$ be a numerical semigroup. The \emph{Frobenius number} of $S$ (see \cite{alfonsin}), denoted by $\mathrm{F}(S)$, is the maximum integer that does not belong to $S$.

A \emph{Frobenius variety} (see \cite{variedades}) is a non-empty family of numerical semigroups $\mathcal{V}$ that fulfills the following conditions.
\begin{enumerate}
	\item[] \hspace{-20pt} $(V1)$ If $S,T \in \mathcal{V}$, then $S\cap T\in \mathcal{V}$.
	\item[] \hspace{-20pt} $(V2)$ If $ S\in \mathcal{V}$ and $S\neq \mathbb{N}$, then $S\cup \{\mathrm{F}(S)\} \in \mathcal{V}$.
\end{enumerate}

In Section~\ref{monoids} we will see that $\mathcal{N}(a,b,X)$ is a Frobenius variety. In addition, we will show that such a variety is finite if and only if $\gcd(X \cup \{b_1,\ldots,b_n\})=1$ (where, as usual, $\gcd(A)$ is the \emph{greatest common divisor} of the elements in $A$).

Let us denote by $M(a,b,X)$ the intersection of all the elements in $\mathcal{N}(a,b,X)$. Observe that $M(a,b,X)$ is always a submonoid of $(\mathbb{N},+)$. In addition, we will prove that $M(a,b,X)$ is a numerical semigroup if and only if $\mathcal{N}(a,b,X)$ has finitely many elements.

In Section~\ref{monoids} we will show that $\mathrm{P}(a,b,X,g)$ has solution if and only if the cardinality of $\mathbb{N} \setminus M(a,b,X)$ is greatest than or equal to $g$. Moreover, we will give an algorithm in order to compute $M(a,b,X)$. Therefore, we will have an algorithmic process to decide whether $\mathrm{P}(a,b,X,g)$ has solution.

In Section~\ref{tree}, with the help of some results from \cite{variedades}, we will arrange the elements of $\mathcal{N}(a,b,X)$ in a tree with root $\mathbb{N}$. Moreover, we will characterize the children of a vertex in such a tree and, consequently, will have a recursive procedure in order to build $\mathcal{N}(a,b,X)$. Accordingly, we will have an algorithmic process to compute all the elements of $\mathcal{N}(a,b,X)$ with a fixed genus and, in particular, an algorithm to compute all the solutions of $\mathrm{P}(a,b,X,g)$.

Finally, in Section~\ref{generalization} we will state and solve a generalization of the problem $\mathrm{P}(a,b,X,g)$.

\section{$\mathbf{(a,b)}$-monoids}\label{monoids}

In this work, unless stated otherwise, $a=(a_1,\ldots,a_n)$ and $b=(b_1,\ldots,b_n)$ denote two $n$-tuples of positive integers. If $X \subseteq \mathbb{N}$, then $\mathcal{N}(a,b,X)$ is the set $\left\{ S \, \mid S \;\, \mbox{is a numerical semigroup}, \; X \subseteq S, \; \mbox{and} \; as+b\in S^n \;\, \mbox{for all} \; s \in S \setminus \{0\} \right\},$ \\
with $as+b = (a_1s+b_1,\ldots,a_ns+b_n)$.

\begin{proposition}\label{prop1}
	$\mathcal{N}(a,b,X)$ is a Frobenius variety.
\end{proposition}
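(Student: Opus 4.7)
The plan is to verify the three things that make $\mathcal{N}(a,b,X)$ a Frobenius variety: non-emptiness, closure under intersection (V1), and closure under the $S \mapsto S \cup \{\mathrm{F}(S)\}$ operation (V2). Non-emptiness comes for free because $\mathbb{N}$ is a numerical semigroup, trivially contains $X$, and satisfies $as+b\in\mathbb{N}^n$ for every $s\in\mathbb{N}\setminus\{0\}$, so $\mathbb{N}\in\mathcal{N}(a,b,X)$.

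For (V1), take $S,T\in\mathcal{N}(a,b,X)$. First, the intersection of two numerical semigroups is a numerical semigroup (closed under $+$, contains $0$, cofinite complement). Second, $X\subseteq S\cap T$ since $X$ lies in each. Third, if $s\in (S\cap T)\setminus\{0\}$, then $s$ is a non-zero element of both $S$ and $T$, so $as+b\in S^n$ and $as+b\in T^n$, whence each coordinate lies in $S\cap T$ and $as+b\in (S\cap T)^n$. So $S\cap T\in\mathcal{N}(a,b,X)$.

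For (V2), let $S\in\mathcal{N}(a,b,X)$ with $S\neq\mathbb{N}$, put $F=\mathrm{F}(S)$, and set $T=S\cup\{F\}$. That $T$ is again a numerical semigroup is standard: any sum involving $F$ lands above $F$ and hence in $S\subseteq T$. The inclusion $X\subseteq T$ is immediate from $X\subseteq S$. To check the $(a,b)$-condition, let $t\in T\setminus\{0\}$. If $t\in S\setminus\{0\}$, then $at+b\in S^n\subseteq T^n$ by hypothesis on $S$. If $t=F$, then for each $i$ we have $a_iF+b_i\geq F+b_i>F$ (using $a_i\geq 1$ and $b_i\geq 1$), and every integer strictly greater than $F$ lies in $S$, so $a_iF+b_i\in S\subseteq T$. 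Hence $aF+b\in T^n$, finishing (V2).

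No step looks like a real obstacle; the only mildly delicate point is (V2), and it works only because the hypotheses $a_i,b_i\geq 1$ force $a_iF+b_i$ strictly past the Frobenius number. I would make this inequality explicit in the write-up so the reader sees where positivity of $a$ and $b$ is actually used.
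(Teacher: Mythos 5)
Your proof is correct and follows essentially the same route as the paper: non-emptiness via $\mathbb{N}$, closure under intersection checked coordinatewise, and $(V2)$ reduced to the inequality $a_i\mathrm{F}(S)+b_i>\mathrm{F}(S)$, which is exactly the paper's key observation. You simply spell out a few routine verifications that the paper leaves as ``clear.''
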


\begin{proof}
	It is clear that $\mathbb{N} \in \mathcal{N}(a,b,X)$ and, therefore, $\mathcal{N}(a,b,X) \neq \emptyset$. It is also clear that, if $S,T \in \mathcal{N}(a,b,X)$, then $S\cap T \in \mathcal{N}(a,b,X)$. Now, let $S\in \mathcal{N}(a,b,X)$ such that $S \neq \mathbb{N}$. In order to show that $S \cup \{\mathrm{F}(S)\} \in \mathcal{N}(a,b,X)$, it is enough to see that $a\mathrm{F}(S)+b \in (S \cup \{\mathrm{F}(S)\})^n$. Observe that, if $i \in \{1,\ldots,n\}$, then $a_i\mathrm{F}(S)+b_i > \mathrm{F}(S)$ and, therefore, $a_i\mathrm{F}(S)+b_i \in S \cup \{\mathrm{F}(S)\}$. Consequently, $a\mathrm{F}(S)+b \in (S \cup \{\mathrm{F}(S)\})^n$.
\end{proof}

Let $M$ be a submonoid of $(\mathbb{N},+)$. We will say that $M$ is an \emph{$(a,b)$-monoid} if $am+b \in M^n$ for all $m \in M \setminus \{0\}$.

\begin{proposition}\label{prop2}
	Let $M$ be a submonoid of $(\mathbb{N},+)$ and let $X \subseteq {\mathbb N}$. Then $M$ is an $(a,b)$-monoid if and only if $M$ is the intersection of elements in $\mathcal{N}(a,b,X)$.
\end{proposition}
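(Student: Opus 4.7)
The proposition has two directions. The direction ``$M$ is an intersection of elements of $\mathcal{N}(a,b,X)$ implies $M$ is an $(a,b)$-monoid'' is immediate: an arbitrary intersection of submonoids of $(\mathbb{N},+)$ is a submonoid, and if $m\in M\setminus\{0\}$ then $m$ lies in every factor $S\in\mathcal{N}(a,b,X)$, so $am+b\in S^{n}$ for every such $S$, and therefore $am+b\in M^{n}$. So the content lies in the forward direction, where we must manufacture elements of $\mathcal{N}(a,b,X)$ cutting out $M$ (here I am reading the statement with the implicit assumption that $X\subseteq M$, which is clearly necessary for $M$ to be expressible as such an intersection).

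My plan for the forward direction is the standard ``separate each hole'' argument. Assume $M$ is an $(a,b)$-monoid with $X\subseteq M$. If $M=\mathbb{N}$ we are done since $\mathbb{N}\in\mathcal{N}(a,b,X)$. Otherwise, for each $h\in\mathbb{N}\setminus M$, I would like to produce some $S_{h}\in\mathcal{N}(a,b,X)$ containing $M$ but missing $h$; once this is done, clearly $M\subseteq\bigcap_{h\notin M}S_{h}$, and the reverse inclusion follows from the fact that $h\notin S_{h}$ for every $h\notin M$.

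The natural candidate is $S_{h}:=M\cup\{x\in\mathbb{N}\mid x>h\}$. I would verify the required properties as follows. First, $S_{h}$ is closed under addition: if both summands lie in $M$ there is nothing to prove, and if at least one summand exceeds $h$ the sum does too (remember $0\in M$). Second, $\mathbb{N}\setminus S_{h}\subseteq\{1,\dots,h\}\setminus M$ is finite, so $S_{h}$ is a numerical semigroup. Third, $X\subseteq M\subseteq S_{h}$. Fourth, and the only slightly delicate step, one has to check that $a s+b\in S_{h}^{n}$ for every $s\in S_{h}\setminus\{0\}$. This splits into two cases: if $s\in M$, then $as+b\in M^{n}\subseteq S_{h}^{n}$ because $M$ is an $(a,b)$-monoid; if $s>h$, then since each $a_{i},b_{i}\geq 1$ we have $a_{i}s+b_{i}>s>h$, so each coordinate lies in $S_{h}$ automatically. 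Finally, $h\notin S_{h}$ since $h\notin M$ and $h\not>h$.

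The genuine obstacle is really just the packaging: one must spot the right ``threshold'' construction $S_{h}=M\cup\{x>h\}$ and recognise that the hypothesis $a_{i},b_{i}\geq 1$ makes the map $s\mapsto a_{i}s+b_{i}$ strictly increase past any fixed threshold, which is precisely what keeps $S_{h}$ inside $\mathcal{N}(a,b,X)$. Once this is noticed, every verification is a one-line check, and the decomposition $M=\bigcap_{h\in\mathbb{N}\setminus M}S_{h}$ falls out.
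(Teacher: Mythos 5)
Your proof is correct and follows essentially the same route as the paper, which also writes $M$ as the intersection of the threshold semigroups $M_k = M \cup \{k,\to\}$ (your $S_h$ is exactly $M_{h+1}$); you merely spell out the verifications the paper dismisses as clear, and you rightly flag the implicit hypothesis $X \subseteq M$ that the paper's statement also tacitly uses.
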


\begin{proof}
	The sufficient condition is trivial. For the necessary one, let us take $M_k = M \cup \{k,\to\}$, for all $k \in \mathbb{N}$ (where $\{k,\to\}=\{n\in \mathbb{N} \mid n \geq k\})$. Then it is clear that $M_k \in \mathcal{N}(a,b,X)$ and that $M=\bigcap_{k\in \mathbb{N}} M_k$.
\end{proof}

Let us observe that, if we denote by $M(a,b,X)= \bigcap_{S\in \mathcal{N}(a,b,X)} S$, then $M(a,b,X)$ is the smallest $(a,b)$-monoid containing $X$.

\begin{theorem}\label{thm3}
	Let $X$ be a non-empty subset of $\mathbb{N} \setminus \{0\}$ and let $g$ be a positive integer. Then the problem $\mathrm{P}(a,b,X,g)$ has solution if and only if the cardinality of $\mathbb{N} \setminus M(a,b,X)$ is greatest than or equal to $g$.
\end{theorem}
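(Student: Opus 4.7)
The plan is to prove both implications by leveraging the description of solutions as complements of semigroups in $\mathcal{N}(a,b,X)$ together with the fact that $M(a,b,X)$ is contained in every element of $\mathcal{N}(a,b,X)$.

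For the forward direction, I would suppose $C$ is a solution of $\mathrm{P}(a,b,X,g)$. By the translation discussed in the introduction, $S=\mathbb{N}\setminus C$ is an element of $\mathcal{N}(a,b,X)$ of genus $g$. Since $M(a,b,X)=\bigcap_{T\in\mathcal{N}(a,b,X)} T\subseteq S$, taking complements gives $C=\mathbb{N}\setminus S\subseteq\mathbb{N}\setminus M(a,b,X)$, which yields $|\mathbb{N}\setminus M(a,b,X)|\geq|C|=g$ immediately.

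For the converse, I would need to exhibit some $S\in\mathcal{N}(a,b,X)$ of genus exactly $g$. The natural tool is the construction used in the proof of Proposition~\ref{prop2}: for each $k\in\mathbb{N}$, set $M_k=M(a,b,X)\cup\{k,\to\}$; then $M_k\in\mathcal{N}(a,b,X)$. Writing $\mathbb{N}\setminus M(a,b,X)=\{x_1<x_2<\cdots\}$ (note $0\in M(a,b,X)$, so $x_1\geq 1$), the hypothesis $|\mathbb{N}\setminus M(a,b,X)|\geq g$ guarantees the existence of $x_g$. Choosing $k=x_g+1$, one checks that
\[
\mathbb{N}\setminus M_k=(\mathbb{N}\setminus M(a,b,X))\cap\{0,1,\ldots,x_g\}=\{x_1,\ldots,x_g\},
\]
so $M_k$ is a numerical semigroup in $\mathcal{N}(a,b,X)$ of genus $g$, and $\mathbb{N}\setminus M_k$ is the desired solution of $\mathrm{P}(a,b,X,g)$.

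There is no serious obstacle here: the theorem reduces to the monotonicity of genus under inclusion and the observation that the family $\{M_k\}_{k\in\mathbb{N}}$ already provided by Proposition~\ref{prop2} realises every genus between $0$ and $|\mathbb{N}\setminus M(a,b,X)|$. The only point requiring a moment of care is verifying that $k=x_g+1$ produces exactly $g$ gaps, which is what the above display records; the fact that $M_k$ genuinely lies in $\mathcal{N}(a,b,X)$ is already available from Proposition~\ref{prop2}.
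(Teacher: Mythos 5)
Your proposal is correct and follows essentially the same route as the paper: the necessity is the same containment argument via $M(a,b,X)\subseteq S=\mathbb{N}\setminus C$, and your semigroup $M_{x_g+1}=M(a,b,X)\cup\{x_g+1,\to\}$ is exactly the paper's witness $M(a,b,X)\cup\{c_g+1,\to\}$. The only cosmetic difference is that you justify membership in $\mathcal{N}(a,b,X)$ by citing the construction in Proposition~\ref{prop2}, whereas the paper asserts it directly.
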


\begin{proof}
	\emph{(Necessity.)} If $C$ is a solution of $\mathrm{P}(a,b,X,g)$ and we take $S=\mathbb{N} \setminus C$, then $S \in \mathcal{N}(a,b,X)$ and $\mathrm{g}(S)=g$. Since $M(a,b,X) \subseteq S$, we have that $\mathbb{N} \setminus S \subseteq \mathbb{N} \setminus M(a,b,X)$ and, thereby, the cardinality of $\mathbb{N} \setminus M(a,b,X)$ is greatest than or equal to $g$.
	
	\emph{(Sufficiency.)} Let us suppose that $\mathbb{N} \setminus M(a,b,X)=\{c_1<\cdots<c_g<\cdots\}$. If we take $S= M(a,b,X) \cup \{c_g+1,\to\}$, then it is clear that $S \in \mathcal{N}(a,b,X)$ and $\mathrm{g}(S)=g$. Therefore, $C=\mathbb{N} \setminus S$ is a solution of $\mathrm{P}(a,b,X,g)$.
\end{proof}

Let us observe that, if $\mathrm{P}(a,b,X,g)$ has solution and, in addition, we know $M(a,b,X)$, then the proof of the sufficient condition in the previous theorem gives us a method to compute a solution of $\mathrm{P}(a,b,X,g)$.

If $X$ is a non-empty subset of $\mathbb{N}$, then we denote by $\langle X \rangle$ the submomoid of $(\mathbb{N},+)$ generated by $X$, that is,
$$\langle X \rangle = \{ \lambda_1x_1+\cdots+\lambda_kx_k \mid k \in \mathbb{N}\setminus \{0\}, \; x_1,\ldots,x_k \in X, \; \mbox{and} \; \lambda_1,\ldots,\lambda_k \in \mathbb{N} \}.$$
If $M=\langle X \rangle$, then we will say that $M$ is generated by $X$ or, equivalently, that $X$ is a \emph{system of generators} of $M$. The next result is well known (see, for instance, \cite{springer}).

\begin{lemma}\label{lem4}
	If $X \subseteq \mathbb{N}$, then $\langle X \rangle$ is a numerical semigroup if and only if $\gcd(X)=1$.
\end{lemma}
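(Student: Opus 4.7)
The statement splits into two implications that I would treat separately, with sufficiency being the substantive direction.

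For necessity, I would argue by contrapositive. Set $d=\gcd(X)$ and suppose $d\geq 2$. Every element of $\langle X\rangle$ is, by definition, a non-negative integer combination of elements of $X$, hence divisible by $d$. Consequently $\mathbb{N}\setminus\langle X\rangle$ contains every positive integer not divisible by $d$, and this set is infinite. Therefore $\langle X\rangle$ is not a numerical semigroup. This step is essentially bookkeeping.

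For sufficiency, assume $\gcd(X)=1$. The plan is to exhibit two coprime elements of $\langle X\rangle$ and then invoke the classical two-generator fact. First, because the $\gcd$ of $X$ is realized by a finite subset, I pick $x_1,\ldots,x_k\in X$ with $\gcd(x_1,\ldots,x_k)=1$. By a repeated Bézout argument there exist integers $\lambda_1,\ldots,\lambda_k\in\mathbb{Z}$ such that $\sum_{i=1}^k\lambda_i x_i=1$. Splitting the sum into the terms with $\lambda_i\geq 0$ and those with $\lambda_i<0$, one obtains $P,Q\in\langle X\rangle$ with $P-Q=1$, so in particular $\gcd(P,Q)=1$.

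Now I would appeal to the well-known Sylvester--Frobenius result that, when $\gcd(p,q)=1$, the submonoid $\langle p,q\rangle$ contains every integer $m\geq (p-1)(q-1)$, so its complement in $\mathbb{N}$ is finite. Applied to $p=P$, $q=Q$, this yields that $\langle P,Q\rangle$ is a numerical semigroup. Since $\langle P,Q\rangle\subseteq\langle X\rangle\subseteq\mathbb{N}$ and the smallest of the three has finite complement, so does $\langle X\rangle$, which is therefore a numerical semigroup.

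The only genuine obstacle is establishing that two coprime elements of $\langle X\rangle$ suffice; everything else is either immediate or a quotation of the standard two-generator Frobenius bound. Since the lemma is explicitly cited to \cite{springer}, I would keep the two-generator fact as a black box rather than reproving the explicit bound $(P-1)(Q-1)$.
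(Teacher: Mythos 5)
The paper offers no proof of this lemma at all: it is quoted as a well-known fact from \cite{springer}, so there is no in-paper argument to compare against. Your proof is correct and is essentially the standard textbook argument: necessity via the observation that every element of $\langle X\rangle$ is divisible by $d=\gcd(X)\geq 2$, and sufficiency via a Bézout argument producing $P,Q\in\langle X\rangle$ with $P-Q=1$, followed by the two-generator Sylvester--Frobenius bound applied to $\langle P,Q\rangle\subseteq\langle X\rangle$. Two trivial loose ends are worth a word: the contrapositive in the necessity direction should also cover $\gcd(X)=0$, i.e.\ $X\subseteq\{0\}$, where $\langle X\rangle=\{0\}$ fails to be a numerical semigroup for the same reason; and in the sufficiency step the degenerate case $Q=0$ forces $P=1$, so that $\langle X\rangle=\mathbb{N}$ outright and the Frobenius bound is not needed. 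Neither affects the substance of the argument.
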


We know that $M(a,b,X)$ is a submonoid of $(\mathbb{N},+)$. From the following proposition we will get that, if $X \subseteq \mathbb{N} \setminus \{0\}$, then $M(a,b,X)$ is a numerical semigroup if and only if $\gcd(X\cup \{b_1,\ldots,b_n\})=1$.

\begin{proposition}\label{prop5}
	If $X \subseteq \mathbb{N} \setminus \{0\}$, then $\gcd(M(a,b,X))=\gcd(X\cup \{b_1,\ldots,b_n\})$.
\end{proposition}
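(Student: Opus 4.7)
The plan is to prove both divisibilities $d' \mid d$ and $d \mid d'$, where $d = \gcd(X \cup \{b_1,\ldots,b_n\})$ and $d' = \gcd(M(a,b,X))$.

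For $d' \mid d$, I will exploit that $X \subseteq M(a,b,X)$ and that $M(a,b,X)$ is itself an $(a,b)$-monoid. Every $x \in X$ lies in $M(a,b,X)$, so $d'$ divides $x$; in particular, since $X \subseteq \mathbb{N}\setminus\{0\}$ is nonempty, we may fix some $x\in X$. Because $M(a,b,X)$ is an $(a,b)$-monoid and $x \neq 0$, we have $a_ix + b_i \in M(a,b,X)$ for each $i$, hence $d' \mid a_ix+b_i$. Combined with $d' \mid x$, this forces $d' \mid b_i$ for every $i$. Therefore $d'$ divides $\gcd(X \cup \{b_1,\ldots,b_n\})=d$.

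For the reverse divisibility $d \mid d'$, the key step is to exhibit an $(a,b)$-monoid that sits between $M(a,b,X)$ and $d\mathbb{N}$, and invoke the minimality remark established just before Theorem~\ref{thm3} (that $M(a,b,X)$ is the smallest $(a,b)$-monoid containing $X$). The natural candidate is $d\mathbb{N} = \{kd : k \in \mathbb{N}\}$ itself. This is clearly a submonoid of $(\mathbb{N},+)$ and contains $X$ because $d \mid x$ for every $x \in X$. To see that $d\mathbb{N}$ is an $(a,b)$-monoid, take any $m = kd \in d\mathbb{N}\setminus\{0\}$; then $a_i m + b_i = a_i k d + b_i$, and since $d \mid b_i$ by definition of $d$, this quantity lies in $d\mathbb{N}$. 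By minimality, $M(a,b,X) \subseteq d\mathbb{N}$, so $d$ divides every element of $M(a,b,X)$, and hence $d \mid d'$.

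Neither direction looks genuinely difficult; the only substantive ingredient is recognizing $d\mathbb{N}$ as an $(a,b)$-monoid, which in turn only uses $d \mid b_i$. The mild subtlety to watch is that the argument for $d' \mid d$ relies on $M(a,b,X)$ having a nonzero element, which is why one needs $X$ nonempty (guaranteed by the hypothesis $X \subseteq \mathbb{N}\setminus\{0\}$ together with the standing convention that $X$ is nonempty throughout the paper).
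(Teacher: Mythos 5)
Your proof is correct and follows essentially the same route as the paper's: one divisibility comes from observing that the multiples of $\gcd(X\cup\{b_1,\ldots,b_n\})$ form an $(a,b)$-monoid containing $X$ (the paper writes this monoid as $\langle\{d'\}\rangle$) and invoking minimality of $M(a,b,X)$, while the other comes from noting that $x$ and the elements $a_ix+b_i$ all lie in $M(a,b,X)$ and have the same gcd as $\{x,b_1,\ldots,b_n\}$. Your remark about needing $X\neq\emptyset$ is a sensible precaution that the paper leaves implicit.
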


\begin{proof}
	Let $d=\gcd(M(a,b,X))$ and $d'=\gcd(X\cup \{b_1,\ldots,b_n\})$. In order to prove the proposition, we will show that $d'\mid d$ and $d\mid d'$. (As usual, if $p,q$ are positive integers, then $p \mid q$ means that $p$ divides $q$.)
	
	First of all, it is clear that $\langle \{d'\} \rangle$ is an $(a,b)$-monoid containing $X$. Therefore, $M(a,b,X) \subseteq \langle \{d'\} \rangle$ and, consequently, $d'\mid d$.
	
	Now, let us take $x \in X$. Then $\{x,a_1x+b_1,\ldots,a_nx+b_n\} \subseteq M(a,b,X)$ and $\gcd\{x,a_1x+b_1,\ldots,a_nx+b_n\}=\gcd\{x,b_1,\ldots,b_n\}$. Therefore, we have that $X \cup \left( \bigcup_{i=1}^n \{a_ix+b_i \mid x \in X\} \right) \subseteq M(a,b,X)$ and $\gcd\left( X \cup \left( \bigcup_{i=1}^n \{a_ix+b_i \mid x \in X\} \right) \right)$ $= \gcd(X\cup \{b_1,\ldots,b_n\})=d'$. Accordingly, $d\mid d'$.
\end{proof}

In the next result we show when the Frobenius variety $\mathcal{N}(a,b,X)$ is finite.

\begin{theorem}\label{thm6}
	Let $X$ be a subset of $\mathbb{N}\setminus \{0\}$. Then the following conditions are equivalent.
	\begin{enumerate}
		\item[\textit{1.}] $\mathcal{N}(a,b,X)$ is finite.
		\item[\textit{2.}] $M(a,b,X)$ is a numerical semigroup.
		\item[\textit{3.}] $\gcd(X\cup \{b_1,\ldots,b_n\})=1$.
	\end{enumerate}
\end{theorem}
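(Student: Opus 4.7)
The plan is to prove the three implications $(1) \Rightarrow (2) \Rightarrow (3) \Rightarrow (1)$, using Proposition~\ref{prop5} and Lemma~\ref{lem4} as the main inputs. None of the steps should require any new machinery beyond what has already been developed in Section~\ref{monoids}, so the challenge is organizational rather than technical.

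For $(1) \Rightarrow (2)$, I would use the formula $M(a,b,X)=\bigcap_{S\in \mathcal{N}(a,b,X)}S$. When $\mathcal{N}(a,b,X)$ is finite, this intersection is a finite intersection of numerical semigroups; since $\mathbb{N}\setminus M(a,b,X)$ is then the finite union of the finite sets $\mathbb{N}\setminus S$, the monoid $M(a,b,X)$ has finite complement in $\mathbb{N}$, and therefore is a numerical semigroup.

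For $(2) \Rightarrow (3)$, I would apply Proposition~\ref{prop5} directly: if $M(a,b,X)$ is a numerical semigroup, then $\gcd(M(a,b,X))=1$ (this is one direction of the standard fact underlying Lemma~\ref{lem4}, since $M(a,b,X)=\langle M(a,b,X)\rangle$), and Proposition~\ref{prop5} then forces $\gcd(X\cup\{b_1,\ldots,b_n\})=1$.

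The implication $(3) \Rightarrow (1)$ is the one I would expect to be the main (minor) obstacle, because it combines both previous ideas and requires observing that all elements of $\mathcal{N}(a,b,X)$ are sandwiched between $M(a,b,X)$ and $\mathbb{N}$. Concretely: assuming $\gcd(X\cup\{b_1,\ldots,b_n\})=1$, Proposition~\ref{prop5} gives $\gcd(M(a,b,X))=1$, so writing $M(a,b,X)=\langle M(a,b,X)\rangle$ and invoking Lemma~\ref{lem4} shows that $M(a,b,X)$ is a numerical semigroup, i.e., $\mathbb{N}\setminus M(a,b,X)$ is a finite set. Since every $S\in\mathcal{N}(a,b,X)$ satisfies $M(a,b,X)\subseteq S\subseteq\mathbb{N}$, each such $S$ is determined by the subset $\mathbb{N}\setminus S\subseteq\mathbb{N}\setminus M(a,b,X)$ of a fixed finite set, leaving only finitely many possibilities for $S$ and closing the cycle of implications.
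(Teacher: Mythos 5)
Your proof is correct and uses exactly the same ingredients as the paper (Lemma~\ref{lem4} and Proposition~\ref{prop5} for the gcd condition, the fact that a finite intersection of numerical semigroups is a numerical semigroup, and the observation that every $S\in\mathcal{N}(a,b,X)$ is sandwiched between $M(a,b,X)$ and $\mathbb{N}$); the only difference is organizational, as you close a cycle $1\Rightarrow2\Rightarrow3\Rightarrow1$ where the paper proves the two equivalences $2\Leftrightarrow3$ and $1\Leftrightarrow2$ separately.
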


\begin{proof}
	The equivalence between conditions \textit{2} and \textit{3} is consequence of Lemma~\ref{lem4} and Proposition~\ref{prop5}. Now, let us see the equivalence between conditions \textit{1} and \textit{2}.
	
	$\textit{(1.} \Rightarrow \textit{2.)}$ It is enough to observe that the finite intersection of numerical semigroups is another numerical semigroup.
	
	$\textit{(2.} \Rightarrow \textit{1.)}$ If $S \in \mathcal{N}(a,b,X)$, then $M(a,b,X) \subseteq S$. Thus, $S=M(a,b,X) \cup Y$ with $Y\subseteq \mathbb{N} \setminus M(a,b,X)$. Since $\mathbb{N} \setminus M(a,b,X)$ is finite, we conclude that $\mathcal{N}(a,b,X)$ is finite.
\end{proof}

Our next aim in this section will be to give an algorithm in order to compute $M(a,b,X)$. For this is fundamental the following result.

\begin{proposition}\label{prop7}
	Let $M$ be a submonoid of $(\mathbb{N},+)$ generated by $X \subseteq \mathbb{N}\setminus \{0\}$. Then $M$ is an $(a,b)$-monoid if and only if $ax+b \in M^n$ for all $x \in X$.
\end{proposition}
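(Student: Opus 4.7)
The plan is to handle the two directions separately, with the forward direction being essentially free and the reverse direction reducing to an algebraic manipulation that lets us split off one generator from a representation of $m$.

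For the forward direction, I would simply observe that every element of $X$ is a nonzero element of $M$ (since $X \subseteq \mathbb{N}\setminus\{0\}$ and $X \subseteq \langle X\rangle = M$), so if $M$ is an $(a,b)$-monoid then by definition $ax+b\in M^n$ for every $x\in X$.

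The reverse direction is the real content. Assume $ax+b\in M^n$ for all $x\in X$, and take an arbitrary $m\in M\setminus\{0\}$. Since $m\in \langle X\rangle$ and $m\neq 0$, we can write $m=\lambda_1 x_1+\cdots+\lambda_k x_k$ with $x_1,\ldots,x_k\in X$, $\lambda_1,\ldots,\lambda_k\in \mathbb{N}$, and at least one $\lambda_j\geq 1$; relabelling, say $\lambda_1\geq 1$. Fix $i\in\{1,\ldots,n\}$. Then I would rewrite
\[
a_i m + b_i = (a_i x_1 + b_i) + a_i(\lambda_1-1)x_1 + a_i\lambda_2 x_2 + \cdots + a_i\lambda_k x_k.
\]
By hypothesis $a_i x_1+b_i\in M$, and the remaining summand is a non-negative integer linear combination of elements of $X$, so it lies in $\langle X\rangle = M$. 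Since $M$ is closed under addition, $a_i m + b_i\in M$. As this holds for every $i$, we get $am+b\in M^n$, and since $m\in M\setminus\{0\}$ was arbitrary, $M$ is an $(a,b)$-monoid.

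The only step with any subtlety is the bookkeeping of guaranteeing $\lambda_1\geq 1$ so that the ``$-1$'' stays in $\mathbb{N}$; this is where the assumption $m\neq 0$ is used. Apart from that, the argument is just substitution and closure of $M$ under addition, so I do not expect any real obstacle.
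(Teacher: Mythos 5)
Your proof is correct and follows essentially the same route as the paper: both arguments peel off a single occurrence of a generator $x$ from a representation of $m$, attach the $+b$ to that occurrence so the hypothesis $ax+b\in M^n$ applies, and absorb the rest using closure of $M$ under addition. The paper phrases this with a sum $m=x_1+\cdots+x_t$ (repetitions allowed) and splits off the last summand, while you use multiplicities $\lambda_j$, but the content is identical.
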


\begin{proof}
	The necessary condition is trivial. For the sufficiency, let $m \in M\setminus \{0\}$. Then there exist $x_1,\ldots,x_t \in X$ such that $m=x_1+\cdots+x_t$. If $t=1$, then $m=x_1$ and $am+b=ax_1+b \in M^n$. If $t\geq 2$, then $am+b=a(x_1+\cdots+x_t)+b=a(x_1+\cdots+x_{t-1})+ax_t+b$. Since $a(x_1+\cdots+x_{t-1}),\,ax_t+b \in M^n$, we finish the proof.
\end{proof}

The above proposition will be useful in order to determine whether a submonoid $M$ of $(\mathbb{N},+)$ is or is not an $(a,b)$-monoid. Let us see an example.

\begin{example}\label{exmp8}
	$S=\langle \{4,5,11\} \rangle$ is an $((1,2),(4,1))$-monoid because $(1,2)4+(4,1)=(8,9)\in S^2$, $(1,2)5+(4,1)=(9,11)\in S^2$, and $(1,2)11+(4,1)=(15,23)\in S^2$. Nevertheless, $T=\langle \{5,7,9\} \rangle$ is not an $((1,2),(4,1))$-monoid because $(1,2)5+(4,1)=(9,11)\notin T^2$ (observe that $11\notin T$).
\end{example}

With the help of Proposition~\ref{prop7}, it would be possible to give an algorithm in order to compute $M(a,b,X)$. However, we are going to postpone such an algorithm because, as we will see now, we can focus on case in which $\gcd(X \cup \{b_1,\ldots,b_n\})=1$, and thus simplify the computations.

We say that an integer $d$ divides an $n$-tupla of integers $c=(c_1,\ldots,c_n)$ if $d \mid c_i$ for all $i\in\{1,\ldots,n\}$. In such a case, we denote by $\frac{c}{d} = \left( \frac{c_1}{d},\ldots,\frac{c_n}{d} \right)$. If $A\subseteq \mathbb{Z}$ and $k\in \mathbb{Z}$, then $kA=\{ka \mid a \in A\}$. Finally, if $A\subseteq \mathbb{Z}$, $d\in \mathbb{Z}$, and $d\mid a$ for all $a\in A$, then $\frac{A}{d}=\left\{ \frac{a}{d} \mid a \in A \right\} $.

\begin{lemma}\label{lem9}
	Let $M$ be an $(a,b)$-monoid such that $M \not= \{0\}$. If $\gcd(M)=d$, then
	\begin{enumerate}
		\item $d$ divides $b$;
		\item if $d' \in \mathbb{N} \setminus \{0\}$ and $d'\mid d$, then $\frac{M}{d'}$ is an $\left(a,\frac{b}{d'}\right)$-monoid;
		\item if $k \in \mathbb{N} \setminus \{0\}$, then $kM$ is an $(a,kb)$-monoid.
	\end{enumerate}
\end{lemma}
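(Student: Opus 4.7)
The plan is to verify the three parts directly from the definition of $(a,b)$-monoid, noting that parts 2 and 3 are essentially scaling arguments that reduce to the defining property of $M$.

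For part 1, I would pick any $m \in M \setminus \{0\}$ (which exists since $M \neq \{0\}$). By hypothesis, $a_i m + b_i \in M$ for every $i\in\{1,\ldots,n\}$. Since $d=\gcd(M)$ divides both $m$ and $a_i m + b_i$, it must divide the difference $(a_i m + b_i) - a_i m = b_i$. Hence $d \mid b_i$ for all $i$, that is, $d$ divides $b$.

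For part 2, I would first observe that since $d' \mid d$ and $d = \gcd(M)$, every element of $M$ is divisible by $d'$, so $\frac{M}{d'}$ is a well-defined subset of $\mathbb{N}$. It is a submonoid because $0 \in \frac{M}{d'}$ and $\frac{m_1}{d'} + \frac{m_2}{d'} = \frac{m_1+m_2}{d'} \in \frac{M}{d'}$ whenever $m_1,m_2 \in M$. By part 1 combined with $d' \mid d$, we have $d' \mid b_i$ for all $i$, so $\frac{b}{d'}$ makes sense. To check the $\left(a,\frac{b}{d'}\right)$-property, take $m' \in \frac{M}{d'} \setminus \{0\}$; then $d'm' \in M \setminus \{0\}$, so $a_i(d'm') + b_i \in M$, and dividing by $d'$ gives $a_i m' + \frac{b_i}{d'} \in \frac{M}{d'}$ as required. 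Part 3 is the analogous scaling in the opposite direction: $kM$ is clearly a submonoid, and given $km \in kM \setminus \{0\}$ with $m \in M\setminus\{0\}$, the identity $a_i(km) + kb_i = k(a_i m + b_i)$ together with $a_i m + b_i \in M$ yields $a_i(km) + kb_i \in kM$.

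I do not anticipate a serious obstacle here; the statement is essentially a compatibility of the $(a,b)$-monoid condition with dilations, and the only point requiring mild care is making sure $\frac{b}{d'}$ is well-defined in part 2, which is exactly what part 1 (together with $d'\mid d$) guarantees. For that reason, it is natural to prove the three items in the order in which they appear, since part 1 is used in part 2.
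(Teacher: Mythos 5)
Your proof is correct, and for items 2 and 3 it coincides with the paper's argument essentially line for line (reduce to the defining property of $M$ by scaling by $d'$ or $k$). The one place where you diverge is item 1: the paper deduces it by setting $X = M \setminus \{0\}$, observing that $M = M(a,b,X)$ is the smallest $(a,b)$-monoid containing $X$, and invoking Proposition~\ref{prop5} (which gives $\gcd(M) = \gcd(X \cup \{b_1,\ldots,b_n\})$), whereas you argue directly: pick $m \in M\setminus\{0\}$, note $d$ divides both $m$ and $a_im+b_i$, hence divides their difference $b_i$ minus the multiple $a_i m$. Your route is more elementary and self-contained --- it is in effect an inlining of the divisibility observation that already drives the proof of Proposition~\ref{prop5} --- while the paper's route reuses an established result at the cost of a slightly indirect detour through $M(a,b,X)$. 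Either is fine; you also make explicit two small well-definedness points (that $d' \mid m$ for all $m \in M$ and that $d' \mid b_i$, so $\frac{M}{d'}$ and $\frac{b}{d'}$ make sense) that the paper leaves implicit.
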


\begin{proof}
	\textit{1.} If we take $X=M \setminus \{0\}$, and having in mind that $M(a,b,X)$ is the smallest $(a,b)$-monoid containing $X$, then this item is consequence of Proposition~\ref{prop5}.
	
	\textit{2.} It is clear that $\frac{M}{d'}$ is a submonoid of $(\mathbb{N},+)$. In addition, if $x \in \frac{M}{d'} \setminus \{0\}$, then $d'x \in M \setminus \{0\}$ and, therefore, $ad'x+b \in M^n$. Consequently, $ax+\frac{b}{d'} \in \frac{M^n}{d'} = \left(\frac{M}{d'}\right)^n$. 
	
	\textit{3.} It is clear that $kM$ is a submonoid of $(\mathbb{N},+)$. Now, arguing as in the previous item, if $x \in kM \setminus \{0\}$, then $\frac{x}{k} \in M \setminus \{0\}$ and, therefore, $a\frac{x}{k}+b \in M^n$. Consequently, $ax+kb \in k\cdot M^n = (kM)^n$.
\end{proof}

The next proposition says us that, in order to compute $M(a,b,X)$, it is sufficient to calculate $d=\gcd\left(X \cup \{b_1,\ldots,b_n\}\right)$ and $M\left(a,\frac{b}{d},\frac{X}{d}\right))$.  By the way, observe that $\gcd\left(\frac{X}{d} \cup \left\{ \frac{b_1}{d},\ldots,\frac{b_n}{d} \right\}\right)=1$ and, therefore, $M\left(a,\frac{b}{d},\frac{X}{d}\right)$ is a numerical semigroup.

\begin{proposition}\label{prop10}
	Let $X$ be a subset of $\mathbb{N} \setminus \{0\}$. If $\gcd\left(X \cup \{b_1,\ldots,b_n\}\right)=d$, then $M(a,b,X)=d\cdot M\left(a,\frac{b}{d},\frac{X}{d}\right)$.
\end{proposition}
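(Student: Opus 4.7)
The plan is to establish both inclusions $M(a,b,X) \subseteq d \cdot M(a, b/d, X/d)$ and $d \cdot M(a, b/d, X/d) \subseteq M(a,b,X)$, in each case by exhibiting the relevant set as an $(a,b)$-monoid containing $X$ (respectively as an $(a, b/d)$-monoid containing $X/d$) and then invoking the minimality of $M(a,b,X)$ (respectively of $M(a, b/d, X/d)$) as the smallest such monoid. Note first that $d \mid b_i$ for every $i$ and $d \mid x$ for every $x \in X$ straight from the definition of $d = \gcd(X \cup \{b_1,\ldots,b_n\})$, so both $b/d$ and $X/d$ make sense. Moreover, $\gcd\bigl((X/d) \cup \{b_1/d,\ldots,b_n/d\}\bigr)=1$, whence by Theorem~\ref{thm6} the monoid $N := M(a, b/d, X/d)$ is actually a numerical semigroup.

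For the inclusion $M(a,b,X) \subseteq dN$: since $N$ is an $(a, b/d)$-monoid, item~3 of Lemma~\ref{lem9} (with $k = d$) gives that $dN$ is an $(a, d\cdot (b/d)) = (a,b)$-monoid, and it clearly contains $d \cdot (X/d) = X$. The minimality of $M(a,b,X)$ then forces $M(a,b,X) \subseteq dN$.

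For the reverse inclusion $dN \subseteq M(a,b,X)$: Proposition~\ref{prop5} yields $\gcd(M(a,b,X)) = d$, so $d$ divides every element of $M(a,b,X)$ and the set $M(a,b,X)/d$ is a well-defined submonoid of $(\mathbb{N},+)$. Item~2 of Lemma~\ref{lem9}, applied with $d' = d$, asserts that $M(a,b,X)/d$ is an $(a, b/d)$-monoid; and it visibly contains $X/d$. Hence by the minimality of $N$ we get $N \subseteq M(a,b,X)/d$, equivalently $dN \subseteq M(a,b,X)$.

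The argument is essentially a bookkeeping exercise built on the two scaling statements in Lemma~\ref{lem9} together with Proposition~\ref{prop5}, so I do not expect any serious obstacle. The only mild caveat is that item~2 of Lemma~\ref{lem9} is stated under the hypothesis $M \neq \{0\}$; this is automatic here because $X \neq \emptyset$ and $X \subseteq M(a,b,X) \cap (\mathbb{N} \setminus \{0\})$, so $M(a,b,X) \neq \{0\}$.
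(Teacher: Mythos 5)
Your proof is correct and follows essentially the same route as the paper: item~3 of Lemma~\ref{lem9} gives the inclusion $M(a,b,X) \subseteq d\cdot M\left(a,\frac{b}{d},\frac{X}{d}\right)$, and Proposition~\ref{prop5} together with item~2 of Lemma~\ref{lem9} gives the reverse one. Your added remarks (that $M\left(a,\frac{b}{d},\frac{X}{d}\right)$ is a numerical semigroup, and that $M(a,b,X)\neq\{0\}$ so Lemma~\ref{lem9} applies) are correct but not needed beyond what the paper already does.
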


\begin{proof}
	From item 3 of Lemma~\ref{lem9}, we have that $d\cdot M\left(a,\frac{b}{d},\frac{X}{d}\right)$ is an $(a,b)$-monoid containing $X$. Therefore, $M(a,b,X) \subseteq d\cdot M\left(a,\frac{b}{d},\frac{X}{d}\right)$.
	
	On the other hand, from Proposition~\ref{prop5} and item 2 of Lemma~\ref{lem9}, we deduce that $\frac{M(a,b,X)}{d}$ is an $\left(a,\frac{b}{d}\right)$-monoid containing $\frac{X}{d}$. Consequently, $M\left(a,\frac{b}{d},\frac{X}{d}\right) \subseteq \frac{M(a,b,X)}{d}$, that is, $d\cdot M\left(a,\frac{b}{d},\frac{X}{d}\right) \subseteq M(a,b,X)$.
\end{proof}

We are now ready to show the announced algorithm.

\begin{algorithm}\label{alg11}
	\mbox{ }
	
	\noindent INPUT: A non-empty finite set of positive integers $X$ such that \par \hspace{.73cm} $\gcd\left( X \cup \{b_1,\ldots,b_n\} \right) = 1$. \par
	\noindent OUTPUT: $M(a,b,X)$. \par
	(1) $A=\emptyset$ and $G=X$. \par
	(2) If $G \setminus A = \emptyset$, then return $\langle G \rangle$ and stop the algorithm.\par
	(3) $m=\min(G \setminus A)$. \par
	(4) $H=\left\{ a_im+b_i \mid i \in \{1,\ldots,n\} \mbox{ and } a_im+b_i\notin \langle G \rangle \right\}$.\par
	(5) If $H=\emptyset$, then go to (7). \par
	(6) $G = G \cup H$. \par
	(7) $A = A \cup \{m\}$ and go to (2).\par
\end{algorithm}

Let us justify briefly the performance of this algorithm. Let us observe that, if the algorithm stops, then it returns $\langle G \rangle$ such that $ag+b \in \langle G \rangle^n$ for all $g \in G$. Therefore, by applying Proposition~\ref{prop7}, we have that $\langle G \rangle$ is an $(a,b)$-monoid. In addition, by construction, it is clear that $G$ must be a subset of every $(a,b)$-monoid which contains $X$. Thus, $\langle G \rangle$ is the smallest $(a,b)$-monoid containing $X$. Consequently, in order to justify the algorithm, it will be enough to see that the algorithm stops. In fact, when we arrive to step (7) at the first time, we have that $\gcd(G)=1$ and, thereby, $\langle G \rangle$ is a numerical semigroup. Therefore, $\mathbb{N} \setminus \langle G \rangle$ is finite and we can go to the step (6) only in a finite number of times.

\begin{remark}\label{rem-X}
	If we suppose, for a moment, that $X=\emptyset$ (in some sense, we are removing condition $(P4)$ in $\mathrm{P}(a,b,X,g)$), then it is obvious that $S_k = \{0,k,\to\}$, for all $k \in \mathbb{N}$ (where $\{0,k,\to\}=\{0\} \cup  \{n\in \mathbb{N} \mid n \geq k\})$, are numerical semigroups that belong to $\mathcal{N}(a,b,X)$ independently of the chosen $n$-tuples $a,b$. Thus $M(a,b,X)=\{0\}$, that is, the submonoid $S$ of $(\mathbb{N},+)$ generated by $X=\emptyset$.  We can observed that, if we apply Algorithm~\ref{alg11} in this case, we obtain this output.
\end{remark}

\begin{remark}\label{rem-(a,b)}
	Now, let us suppose that $a,b$ are $0$-tuples, that is, we remove condition $(P3)$ in $\mathrm{P}(a,b,X,g)$. In this case, it is straightforward to see that $M(a,b,X)$ is just the monoid generated by $X$. In addition, that is the output of Algorithm~\ref{alg11} in this case.
\end{remark}

After Remarks~\ref{rem-X} and \ref{rem-(a,b)}, we can conclude that $(P2)$ is the essential condition in order to have a structure related with monoids and, specifically, with numerical semigroups.

Let us illustrate the performance of Algorithm~\ref{alg11} with two examples.

\begin{example}\label{exmp12}
	We are going to compute $M=M((1,2),(4,1),\{5\})$.
	\begin{itemize}
		\item $A=\emptyset$ and $G=\{5\}$.
		\item $m=5$, $H=\{9,11\}$, $G=\{5,9,11\}$, and $A=\{5\}$.
		\item $m=9$, $H=\{13\}$, $G=\{5,9,11,13\}$, and $A=\{5,9\}$.
		\item $m=11$, $H=\emptyset$, $G=\{5,9,11,13\}$, and $A=\{5,9,11\}$.
		\item $m=13$, $H=\{17\}$, $G=\{5,9,11,13,17\}$, and $A=\{5,9,11,13\}$.
		\item $m=17$, $H=\emptyset$, $G=\{5,9,11,13,17\}$, and $A=\{5,9,11,13,17\}$.
	\end{itemize}
	Therefore, $M=\langle \{5,9,11,13,17\} \rangle$.
	
	Going back to the problem $\mathrm{P}((1,2),(4,1),\{5\},6)$ of the introduction, we have that, since $\mathbb{N} \setminus M = \{1,2,3,4,6,7,8,12 \}$ has cardinality equal to 8, then Theorem~\ref{thm3} asserts that the proposed problem has solution. Moreover, the solutions will be some subsets, with cardinality equal to $6$, of $\{1,2,3,4,6,7,8,12 \}$. In addition, by the proof of the sufficiency of Theorem~\ref{thm3}, we know that $\{1,2,3,4,6,7\}$ is a solution of such a problem.
\end{example}

We finish this section with an example in which $M(a,b,X)$ is not a numerical semigroup.

\begin{example}\label{exmp13}
	Let us see that $\mathrm{P}\left( (2,3), (4,2), \{6,8\}, 9 \right)$ has solution. For that, we begin with the computation of $M\left((2,3),(4,2),\{6,8\} \right)$. By applying Proposition~\ref{prop10}, since $\gcd(\{6,8,4,2\})=2$, we know that $M\left((2,3),(4,2),\{6,8\} \right) = 2\cdot M\left((2,3),(2,1),\{3,4\} \right)$. Now, from Algorithm~\ref{alg11}, $M\left((2,3),(2,1),\{3,4\} \right) = \langle \{3,4\} \rangle$. Therefore, $M\left((2,3),(4,2),\{6,8\} \right) = \langle \{6,8\} \rangle = \{0,6,8,12,14,16,\ldots \}$.
	
	Since $\mathbb{N} \setminus M\left((2,3),(4,2),\{6,8\} \right)$ has infinitely many elements, its cardinality is grater than or equal to $9$ and, consequently, Theorem~\ref{thm3} assures that the problem $\mathrm{P}\left( (2,3), (4,2), \{6,8\}, 9 \right)$ has solution. Moreover, by the proof of the sufficiency of Theorem~\ref{thm3}, we have that $\{1,2,3,4,5,7,9,10,11\}$ is a solution.
\end{example}

\section{The tree associated to $\mathcal{N}(a,b,X)$}\label{tree}

A \emph{graph} $G$ is a pair $(V,E)$, where
\begin{itemize}
	\item $V$ is a non-empty set, which elements are called \emph{vertices} of $G$,
	\item $E$ is a subset of $\{(v,w) \in V \times V \mid v \neq w\}$, which elements are called \emph{edges} of $G$.
\end{itemize}
A \emph{path (of length $n$)} connecting the vertices $x$ and $y$ of $G$ is a sequence of different edges of the form $(v_0,v_1),(v_1,v_2),\ldots,(v_{n-1},v_n)$ such that $v_0=x$ and $v_n=y$.

We say that a graph $G$ is a \emph{tree} if there exists a vertex $r$ (known as the \emph{root} of $G$) such that, for every other vertex $x$ of $G$, there exists a unique path connecting $x$ and $r$. If $(x,y)$ is an edge of the tree, then we say that $x$ is a \emph{child} of $y$.

We define the graph $\mathrm{G}(\mathcal{N}(a,b,X))$ in the following way.
\begin{itemize}
	\item $\mathcal{N}(a,b,X)$ is the set of vertices of $\mathrm{G}(\mathcal{N}(a,b,X))$;
	\item $(S,S')\in \mathcal{N}(a,b,X)\times \mathcal{N}(a,b,X)$ is an edge of $\mathrm{G}(\mathcal{N}(a,b,X))$ if $S'=S\cup\{\mathrm{F}(S)\}$.
\end{itemize}
By Proposition~\ref{prop1} and \cite[Theorem 27]{variedades}, we have that $\mathrm{G}(\mathcal{N}(a,b,X))$ is a tree with root $\mathbb{N}$. Our first purpose in this section will be to establish what are the children of a vertex in such a tree. For this we need to introduce some concepts.

Let $S$ be a numerical semigroup and let $G$ be a system of generators of $S$. We say that $G$ is a \emph{minimal system of generators} of $S$ if $S\not= \langle Y \rangle$ for all $Y \subsetneq G$. It is well known (see \cite{springer}) that every numerical semigroup admits a unique minimal system of generators and that, in addition, such a system is finite. Observe that, if we denote by $\mathrm{msg}(S)$ the minimal system of generators of $S$, then $\mathrm{msg}(S) = \left( S \setminus \{0\} \right) \setminus \left( (S \setminus \{0\}) + (S \setminus \{0\}) \right)$. On the other hand, we have (see \cite{springer}) that, if $S$ is a numerical semigroup and $s \in S$, then $S \setminus \{s\}$ is another numerical semigroup if and only if $s \in \mathrm{msg}(S)$.

An immediate consequence of \cite[Proposition 24, Theorem 27]{variedades} is the next result.
\begin{theorem}\label{thm14}
	The graph $\mathrm{G}(\mathcal{N}(a,b,X))$ is a tree with root $\mathbb{N}$. Moreover, the set of children of a vertex $S \in \mathcal{N}(a,b,X)$ is
	$$\left\{ S \setminus \{m\} \mid m \in \mathrm{msg}(S), \; m > \mathrm{F}(S), \; \mbox{and} \; S \setminus \{m\} \in \mathcal{N}(a,b,X) \right\}.$$
\end{theorem}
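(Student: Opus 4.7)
The plan is to deduce Theorem~\ref{thm14} as a direct specialisation of the general tree-structure theory for Frobenius varieties developed in \cite{variedades}. Since Proposition~\ref{prop1} has already verified that $\mathcal{N}(a,b,X)$ is a Frobenius variety, the bulk of the argument is merely the invocation of \cite[Proposition~24, Theorem~27]{variedades}, and the contribution here is to unpack what those abstract statements say in our concrete setting.

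First I would handle the tree structure. Given any $S \in \mathcal{N}(a,b,X)$ with $S \neq \mathbb{N}$, axiom $(V2)$ supplies $S \cup \{\mathrm{F}(S)\} \in \mathcal{N}(a,b,X)$, and $\mathrm{F}(S \cup \{\mathrm{F}(S)\}) > \mathrm{F}(S)$. Iterating this construction strictly increases the Frobenius number while keeping the monoid inside the variety; since $\mathbb{N} \setminus S$ is finite, the iteration terminates at $\mathbb{N}$, producing a path from $S$ to the root. Uniqueness of this path is forced by the rigidity of the edge relation $S' = S \cup \{\mathrm{F}(S)\}$: at each step the next vertex is determined by $S$, so no branching is possible upwards. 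This is precisely what \cite[Theorem~27]{variedades} asserts at the level of an arbitrary Frobenius variety.

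Next I would characterise the children. By the paper's convention, $T$ is a child of $S$ exactly when $(T,S)$ is an edge, i.e. $S = T \cup \{\mathrm{F}(T)\}$. Writing $m = \mathrm{F}(T)$, this is equivalent to $T = S \setminus \{m\}$ subject to three requirements. The condition that $T$ be a numerical semigroup is, by the minimal-generator criterion recalled right before the theorem, the same as $m \in \mathrm{msg}(S)$. The condition $\mathrm{F}(T) = m$ translates into $m > \mathrm{F}(S)$, because every integer strictly greater than $\mathrm{F}(S)$ other than $m$ itself already lies in $S$ and therefore in $T$, whereas any $m \leq \mathrm{F}(S)$ would leave $\mathrm{F}(S) \in \mathbb{N} \setminus T$ as a larger gap. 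Finally, membership $T \in \mathcal{N}(a,b,X)$ is needed for $T$ to be a vertex at all. These three conditions are each necessary, and conversely any $m$ satisfying all three produces a genuine child $S \setminus \{m\}$, matching the description in the theorem.

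The main obstacle is purely a matter of bookkeeping rather than mathematical depth: one must be careful with the direction of the edge (in the paper $T$ is the child of $S$, not the reverse) and one must verify that $m > \mathrm{F}(S)$ is exactly the inequality equivalent to $\mathrm{F}(S \setminus \{m\}) = m$, rather than some weaker condition. Once these points are pinned down, the theorem follows without further work.
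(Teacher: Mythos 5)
Your proposal is correct and takes essentially the same route as the paper, which simply states the theorem as an immediate consequence of Proposition~\ref{prop1} together with \cite[Proposition 24, Theorem 27]{variedades}. The additional unpacking you give of the tree structure and of the equivalence between $m>\mathrm{F}(S)$ and $\mathrm{F}(S\setminus\{m\})=m$ is accurate but not needed beyond the citation.
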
 

In the next result we will show the conditions that must satisfy $m \in \mathrm{msg}(S)$ in order to have $S\setminus \{m\} \in \mathcal{N}(a,b,X)$.

\begin{proposition}\label{prop15}
	Let $S \in \mathcal{N}(a,b,X)$ and let $m \in \mathrm{msg}(S)$. Then $S\setminus \{m\} \in \mathcal{N}(a,b,X)$ if and only if $\frac{m-b_i}{a_i} \notin S \setminus \{0\}$ for all $i \in \{1,\ldots,n\}$ and $m \notin X$.
\end{proposition}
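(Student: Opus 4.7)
The plan is to unpack what membership in $\mathcal{N}(a,b,X)$ requires for $S\setminus\{m\}$ and match each requirement against one of the two conditions in the statement. The numerical-semigroup part is free: since $m \in \mathrm{msg}(S)$, the text already recalls that $S\setminus\{m\}$ is a numerical semigroup. So only two things remain to check, namely $X \subseteq S\setminus\{m\}$ and $a(S\setminus\{0,m\})+b \subseteq (S\setminus\{m\})^n$. The first is visibly equivalent to $m \notin X$ (using $X \subseteq S$), which takes care of one of the two listed conditions.

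The second requirement is the substantive one. Since $S$ is already in $\mathcal{N}(a,b,X)$, we know $a_is+b_i \in S$ for every $s \in S\setminus\{0\}$; the question is only whether one of these values can hit the removed element $m$. So I would argue: $a_is+b_i = m$ has the unique candidate solution $s_i = \frac{m-b_i}{a_i}$, and the obstruction to $S\setminus\{m\} \in \mathcal{N}(a,b,X)$ is exactly that $s_i \in (S\setminus\{m\})\setminus\{0\}$ for some $i$. To convert this to the cleaner condition in the statement, I would observe that $s_i = m$ is impossible: it would force $(1-a_i)m = b_i$, which has no solution in positive integers since $a_i \geq 1$ and $b_i \geq 1$. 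Hence $s_i \in (S\setminus\{m\})\setminus\{0\}$ if and only if $s_i \in S\setminus\{0\}$, and the condition $\frac{m-b_i}{a_i} \notin S\setminus\{0\}$ is precisely what is needed.

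With that bookkeeping, both implications are short. For necessity, if $S\setminus\{m\} \in \mathcal{N}(a,b,X)$ then $m \notin X$ is immediate, and if some $s_i$ lay in $S\setminus\{0\}$ it would also lie in $(S\setminus\{m\})\setminus\{0\}$ by the paragraph above, forcing $m = a_is_i+b_i \in S\setminus\{m\}$, a contradiction. For sufficiency, the two hypotheses together with the already-established fact that $S\setminus\{m\}$ is a numerical semigroup give all three defining conditions of $\mathcal{N}(a,b,X)$.

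I do not anticipate a real obstacle; the only subtlety worth spelling out carefully is the elementary verification that $s_i \neq m$, because without it the stated condition would be slightly too strong. Everything else is a direct translation between the definition of $\mathcal{N}(a,b,X)$ and the allowed values of $a_is+b_i$.
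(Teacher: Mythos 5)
Your proposal is correct and follows essentially the same route as the paper's proof: both reduce the question to whether some $s\in S\setminus\{0,m\}$ satisfies $a_is+b_i=m$, and both rely on the observation that $\frac{m-b_i}{a_i}\neq m$ (which you, unlike the paper, justify explicitly via $(1-a_i)m=b_i$ having no positive solution). No gaps.
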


\begin{proof}
	\emph{(Necessity.)} Since $X \subseteq S\setminus \{m\}$, we have that $m\notin X$. Let us suppose that there exists $i \in \{1,\ldots,n\}$ such that $\frac{m-b_i}{a_i} \in S \setminus \{0\}$. Since $\frac{m-b_i}{a_i} \not= m$, we have that $\frac{m-b_i}{a_i} \in S \setminus \{0,m\}$ and that $a_i \left(\frac{m-b_i}{a_i}\right) + b_i = m \not\in S \setminus \{m\}$. Therefore, $S\setminus \{m\} \notin \mathcal{N}(a,b,X)$.
	
	\emph{(Sufficiency.)} If $S\setminus \{m\} \notin \mathcal{N}(a,b,X)$, then there exists $s \in S\setminus \{0,m\}$ and there exists $i\in \{1,\ldots,n\}$ such that $a_is+b_i \notin S\setminus \{m\}$. Since $S \in \mathcal{N}(a,b,X)$, we know that $a_is+b_i \in S$. Therefore, $a_is+b_i=m$ and, consequently, $\frac{m-b_i}{a_i}=s \in S\setminus \{0\}$.
\end{proof}

Our next purpose will be to build recurrently the tree $\mathrm{G}(\mathcal{N}(a,b,X))$ from the root and joining each vertex with its children by means of edges. In order to make easy that construction, we will study the relation between the minimal system of generators of a numerical semigroup $S$ and the minimal system of generators of $S\setminus \{m\}$, where $m$ is a minimal generator of $S$ greater than $\mathrm{F}(S)$. First of all, it is clear that, if $S$ is minimally generated by $\{m,m+1,\ldots,2m-1\}$ (that is, $S=\{0,m,\to \}$), then $S\setminus \{m\} = \{0,m+1,\to \}$ is minimally generated by $\{m+1,m+2,\ldots,2m+1\}$. In other case we can apply the following result, which is \cite[Corollary 18]{frases}.

\begin{proposition}\label{prop16}
	Let $S$ be a numerical semigroup with minimal system of generators $\mathrm{msg}(S)=\left\{n_1<n_2<\cdots<n_p\right\}$. If $i \in \{2,\ldots,p\}$ and $n_i>\mathrm{F}(S)$, then
	$$\mathrm{msg}(S \setminus \{n_i\})= \left\{ \begin{array}{l}
	\left\{n_1,\ldots,n_{p}\right\} \setminus \{n_i\}, \quad \mbox{if there exists } j \in \{2,\ldots,i-1\} \\ \hspace{3.4cm} \mbox{such that } n_i+n_1-n_j \in S; \\[2mm]
	\left(\left\{n_1,\ldots,n_{p}\right\} \setminus \{n_i\}\right) \cup \{n_i+n_1\}, \quad \mbox{in other case.}
	\end{array} \right.$$
\end{proposition}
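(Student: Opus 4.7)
The plan is to set $T = S \setminus \{n_i\}$ (a numerical semigroup since $n_i \in \mathrm{msg}(S)$) and describe $\mathrm{msg}(T)$ in three stages. As an immediate observation, $\{n_1,\ldots,n_p\} \setminus \{n_i\} \subseteq \mathrm{msg}(T)$: each such element lies in $T$ and is already indecomposable as a sum of two elements of $S\setminus\{0\}$, hence a fortiori of $T\setminus\{0\}$. The task therefore reduces to identifying the set $\mathrm{msg}(T) \setminus \mathrm{msg}(S)$.

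First I would show that $n_i+n_1$ is the only possible ``new'' minimal generator. If $t \in \mathrm{msg}(T) \setminus \mathrm{msg}(S)$, then $t = a+b$ for some $a,b \in S \setminus\{0\}$; since $t \in \mathrm{msg}(T)$, this decomposition cannot live in $T\setminus\{0\}$, forcing (say) $a = n_i$, hence $t = n_i + b$ with $b \geq n_1$. Now invoke the hypothesis $n_i > \mathrm{F}(S)$: since $t - n_1 \geq n_i > \mathrm{F}(S)$, we have $t - n_1 \in S\setminus\{0\}$, producing a second decomposition $t = n_1 + (t - n_1)$ in $S\setminus\{0\}$. By the same minimality argument, one of these summands must equal $n_i$, and since $n_1 \neq n_i$, this forces $t - n_1 = n_i$, i.e.\ $t = n_i+n_1$.

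Next I would characterise when $n_i+n_1$ actually lies in $\mathrm{msg}(T)$. The easy direction: if $n_i + n_1 - n_j \in S$ for some $j \in \{2,\ldots,i-1\}$, set $s = n_i+n_1-n_j$; then $n_j \in T\setminus\{0\}$, and $s \in T\setminus\{0\}$ as well (positivity from $n_j < n_i$, and $s \neq n_i$ from $n_j \neq n_1$), so $n_i+n_1 = n_j + s$ is a decomposition in $T\setminus\{0\}$ that excludes $n_i+n_1$ from $\mathrm{msg}(T)$. For the converse, suppose $n_i+n_1 = u + v$ with $u,v \in T\setminus\{0\}$. A key observation is that no representation of $u$ or $v$ as a sum of minimal generators of $S$ can involve $n_i$: otherwise $u \geq n_i$, forcing $v \leq n_1$, hence $v = n_1$ and $u = n_i$, contradicting $u \in T$. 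Concatenating representations of $u$ and $v$ then yields $n_i+n_1 = \sum_{k \neq i} \lambda_k n_k$ with $\lambda_k \in \mathbb{N}$. This sum cannot consist exclusively of copies of $n_1$ (else $n_i$ would be a positive multiple of $n_1$ distinct from $n_1$, contradicting minimality of $n_i$), so some $n_j$ with $j \geq 2$, $j \neq i$ appears. Removing one copy gives $n_i+n_1-n_j \in S$; moreover $j < i$, for otherwise $n_j > n_i$ would make $n_i+n_1-n_j$ a positive element of $S$ strictly below $n_1$, which is impossible.

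The main difficulty is anticipated in this converse direction: one must extract a minimal generator $n_j$ with the precise index constraint $2 \leq j \leq i-1$ from an abstract decomposition in $T$. The two delicate points are eliminating the degenerate case in which the concatenated representation is a pure multiple of $n_1$ (which leans on the minimality of $n_i$), and eliminating indices $j > i$ (which leans on $n_1$ being the smallest nonzero element of $S$). Both hinge on structural properties of minimal systems of generators and require careful bookkeeping about which representation is invoked at each step.
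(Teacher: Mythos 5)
The paper does not actually prove Proposition~\ref{prop16}; it is imported verbatim as Corollary~18 of \cite{frases}, so there is no in-paper argument to compare yours against. Judged on its own, your proof is correct and complete, and it follows the natural three-step structure one would expect: (i) every $n_k$ with $k\neq i$ stays minimal in $T=S\setminus\{n_i\}$ because indecomposability over $S\setminus\{0\}$ implies indecomposability over the smaller set $T\setminus\{0\}$; (ii) the hypothesis $n_i>\mathrm{F}(S)$ is used exactly where it must be, to force any new minimal generator $t$ to admit the decomposition $t=n_1+(t-n_1)$ with $t-n_1\in S$, whence $t=n_1+n_i$; (iii) the dichotomy on $j\in\{2,\dots,i-1\}$ is established in both directions. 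The only point you state slightly faster than you justify is the positivity of $n_i+n_1-n_j$ when ruling out $j>i$: it is worth noting that the concatenated representation of $u+v$ contains at least two generator summands (one from each of $u$ and $v$), so after deleting one copy of $n_j$ at least one generator remains and in fact $n_i+n_1-n_j\geq n_1$, which already contradicts $n_i+n_1-n_j<n_1$ directly. With that one-line remark inserted, the argument is airtight.
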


Let us illustrate the previous results with an example.

\begin{example}\label{exmp17}
	By Proposition~\ref{prop7}, it is easy to see that $S=\langle \{5,7,8,9,11\} \rangle \in \mathcal{N}((1,2),(4,1),\{5\})$. On the other hand, from Theorem~\ref{thm14}, we know that the set of children of $S$ in the tree $\mathrm{G}(\mathcal{N}((1,2),(4,1),\{5\}))$ is
	$$\left\{ S \setminus \{m\} \mid m \in \mathrm{msg}(S), \; m > \mathrm{F}(S), \; \mbox{and} \; S \setminus \{m\} \in \mathcal{N}((1,2),(4,1),\{5\}) \right\}.$$
	Since $\mathrm{F}(S)=6$, we have that $\{m\in \mathrm{msg}(S) \mid m> \mathrm{F}(S)\} = \{7,8,9,11\}$. Furthermore, by Proposition~\ref{prop15}, we know that $S\setminus \{m\} \in \mathcal{N}((1,2),(4,1),\{5\})$ if and only if $m\notin \{5\}$ and $\{m-4,\frac{m-1}{2}\} \cap (S\setminus \{0\}) = \emptyset$. Thus, since that $\{7-4,\frac{7-1}{2}\} \cap (S\setminus \{0\}) = \{8-4,\frac{8-1}{2}\} \cap (S\setminus \{0\}) = \emptyset$, $\{9-4,\frac{9-1}{2}\} \cap (S\setminus \{0\}) \not= \emptyset$, and $\{11-4,\frac{11-1}{2}\} \cap (S\setminus \{0\}) \not= \emptyset$, we conclude that $S=\langle \{5,7,8,9,11\} \rangle$ has two children. Namely, they are $\langle \{5,7,8,9,11\} \rangle \setminus \{7\} = \langle \{5,8,9,11,12\} \rangle$ and $\langle \{5,7,8,9,11\} \rangle \setminus \{8\} = \langle \{5,7,9,11,13\} \rangle$, where we have applied Proposition~\ref{prop16}.
\end{example}

Following the idea of the previous example, we can recurrently build the tree $\mathrm{G}(\mathcal{N}((1,2),(4,1),\{5\}))$ beginning with its root, that is, from ${\mathbb N}=\langle \{1\} \rangle$.

\begin{center}
	\begin{picture}(315,250)
	\put(204,240){$\langle \{1\} \rangle$}
	\put(216,235){\line(0,-1){15}}
	\put(200,210){$\langle \{2,3\} \rangle$}
	\put(210,205){\line(-1,-1){15}} \put(221,205){\line(1,-1){15}}
	\put(165,180){$\langle \{3,4,5\} \rangle$} \put(228,180){$\langle \{2,5\} \rangle$}
	\put(183,175){\line(-2,-3){10}} \put(195,175){\line(3,-1){45}}
	\put(152,150){$\langle \{4,5,6,7\} \rangle$} \put(235,150){$\langle \{3,5,7\} \rangle$}
	\put(160,145){\line(-4,-3){20}} \put(178,145){\line(1,-1){15}} \put(198,145){\line(4,-1){60}}
	\put(90,120){$\langle \{5,6,7,8,9\} \rangle$} \put(187,120){$\langle \{4,5,7\} \rangle$} \put(253,120){$\langle \{4,5,6\} \rangle$}
	\put(102,115){\line(-2,-3){10}}  \put(123,115){\line(2,-3){10}} \put(140,115){\line(3,-1){45}} \put(221,115){\line(3,-1){45}}
	\put(50,90){$\langle \{5,7,8,9,11\} \rangle$} \put(120,90){$\langle \{5,6,8,9\} \rangle$} \put(175,90){$\langle \{5,6,7,9\} \rangle$} 
	\put(260,90){$\langle \{4,5,11\} \rangle$}
	\put(72,85){\line(-2,-3){10}} \put(90,85){\line(2,-3){10}} \put(155,85){\line(5,-3){25}}
	\put(9,60){$\langle \{5,8,9,11,12\} \rangle$} \put(83,60){$\langle \{5,7,9,11,13\} \rangle$} \put(165,60){$\langle \{5,6,9,13\} \rangle$}
	\put(42,55){\line(0,-1){15}}
	\put(8,30){$\langle \{5,9,11,12,13\} \rangle$}
	\put(42,25){\line(0,-1){15}}
	\put(8,0){$\langle \{5,9,11,13,17\} \rangle$}
	\end{picture}
\end{center}

Let us observe that, since $\gcd(\{5\} \cup \{4,1\})=1$ and by Theorem~\ref{thm6}, then we know that $\mathcal{N}((1,2),(4,1),\{5\})$ is a finite Frobenius variety and, thereby, we have been able of building it completely in a finite number of steps.

Let us also observe that, if $S \in \mathcal{N}(a,b,X)$, then $\mathrm{g}(S)$ is equal to the length of the path connecting $S$ with ${\mathbb N}$ in the tree $\mathrm{G}((\mathcal{N}(a,b,X))$. Therefore, in order to build the elements of $\mathcal{N}(a,b,X)$ with a fixed genus $g$, we only need to build the elements of $\mathcal{N}(a,b,X)$ which are connected to ${\mathbb N}$ through a path of length less than or equal to $g$. Consequently, we have an algorithmic process to compute all the solutions of the problem $\mathrm{P}(a,b,X,g)$.

For instance, in the tree $\mathrm{G}(\mathcal{N}((1,2),(4,1),\{5\}))$, the numerical semigroups which are connected to ${\mathbb N}$ through a path of length $6$ are $\langle \{5,8,9,11,12\} \rangle$, $\langle \{5,7,9,11,13\} \rangle$, and $\langle \{5,6,9,13\} \rangle$. Therefore, the problem proposed in the introduction has three solutions. Namely, ${\mathbb N} \setminus \langle \{5,8,9,11,12\} \rangle = \{1,2,3,4,6,7\}$, ${\mathbb N} \setminus \langle \{5,7,9,11,13\} \rangle = \{1,2,3,4,6,8\}$, and ${\mathbb N} \setminus \langle \{5,6,9,11,13\} \rangle = \{1,2,3,4,7,8\}$.

We finish with an example in which $\mathcal{N}(a,b,X)$ is an infinite Frobenius variety.

\begin{example}\label{exmp18}
	Let us compute all the solutions of $\mathrm{P}((2,3),(4,2),\{6,8\},4)$. First of all, from Example~\ref{exmp13}, we know that $M\left((2,3),(4,2), \{6,8\} \right) = \langle \{6,8\} \rangle$ and, by Theorem~\ref{thm3}, that the problem has solution. Moreover, since $\gcd(\{6,8\} \cup \{4,2\})=2\not=1$, we have that $\mathcal{N}((2,3),(4,2),\{6,8\})$ is a infinite Frobenius variety. However, in a finite number of steps, we can compute the elements of $\mathrm{G}(\mathcal{N}((2,3),(4,2),\{6,8\}))$ which are connected to ${\mathbb N}$ through a path of length $4$, such as we show in the following scheme.
	\begin{center}
		\begin{picture}(315,130)
		\put(194,120){$\langle \{1\} \rangle = {\mathbb N}$}
		\put(216,115){\line(0,-1){15}}
		\put(200,90){$\langle \{2,3\} \rangle$}
		\put(210,85){\line(-5,-3){25}} \put(221,85){\line(5,-3){25}}
		\put(155,60){$\langle \{3,4,5\} \rangle$} \put(230,60){$\langle \{2,5\} \rangle$}
		\put(167,55){\line(-3,-1){45}}  \put(176,55){\line(0,-1){15}} \put(185,55){\line(2,-1){30}} \put(250,55){\line(5,-3){25}}
		\put(97,30){$\langle \{4,5,6,7\} \rangle$} \put(155,30){$\langle \{3,5,7\} \rangle$} \put(203,30){$\langle \{3,4\} \rangle$} 
		\put(260,30){$\langle \{2,7\} \rangle$}
		\put(110,25){\line(-4,-1){60}} \put(125,25){\line(-1,-1){15}} \put(135,25){\line(1,-1){15}} 
		\put(170,25){\line(5,-3){25}} \put(180,25){\line(4,-1){60}} \put(275,25){\line(1,-1){15}}
		\put(9,0){$\langle \{5,6,7,8,9\} \rangle$} \put(73,0){$\langle \{4,6,7,9\} \rangle$} \put(127,0){$\langle \{4,5,6\} \rangle$} 
		\put(180,0){$\langle \{3,7,8\} \rangle$} \put(225,0){$\langle \{3,5\} \rangle$} \put(275,0){$\langle \{2,9\} \rangle$}
		\end{picture}
	\end{center}
	Therefore, the sets ${\mathbb N} \setminus \langle \{5,6,7,8,9\} \rangle = \{1,2,3,4\}$, ${\mathbb N} \setminus \langle \{4,6,7,9\} \rangle = \{1,2,3,5\}$, ${\mathbb N} \setminus \langle \{4,5,6\} \rangle = \{1,2,3,7\}$, ${\mathbb N} \setminus \langle \{3,7,8\} \rangle = \{1,2,4,5\}$, ${\mathbb N} \setminus \langle \{3,5\} \rangle = \{1,2,4,7\}$, and ${\mathbb N} \setminus \langle \{2,9\} \rangle = \{1,3,5,7\}$ are the (six) solutions of $\mathrm{P}((2,3),(4,2),\{6,8\},4)$.
\end{example}

\begin{remark}\label{rem-tree}
	Let us observe that, in the construction of the trees, we can assume that $X=\emptyset$ or that $a,b$ are $0$-tuples (see Remarks~\ref{rem-X} and \ref{rem-(a,b)}). Then, we obtain correctly all the possible solutions in each case. In particular, if we consider jointly such assumptions, then we get the tree associated to the full family of numerical semigroups.
\end{remark}

\section{A generalization of the  problem}\label{generalization}

Along this section $r, g$ are non-negative integers, $a=(a_1,\ldots,a_n)$ and $b=(b_1,\ldots,b_n)$ are $n$-tuples of positive integers, and $X$ is a non-empty subset of $\{r+1,\to\}$. We will denote by $\mathrm{P}_r(a,b,X,g)$ the (generalized) problem of computing all the subsets $C$ of $\{r+1,\to\}$ that fulfill the following conditions.
\begin{itemize}
	\item[] \hspace{-20pt} $(GP1)$ The cardinality of $C$ is equal to $g$.
	\item[] \hspace{-20pt} $(GP2)$ If $x,y\in \{r+1,\to\}$ and $x+y\in C$, then $C \cap \{x,y\} \neq \emptyset$.
	\item[] \hspace{-20pt} $(GP3)$ If $x \in C$ and $\frac{x-b_i}{a_i} \in \{r+1,\to\}$ for some $i\in \{1,\ldots,n\}$, then $\frac{x-b_i}{a_i} \in C$.
	\item[] \hspace{-20pt} $(GP4)$ $X \cap C = \emptyset$. 
\end{itemize}
Let us observe that $\mathrm{P}_0(a,b,X,g)=\mathrm{P}(a,b,X,g)$.

It is clear that a set $C$ is a solution of $\mathrm{P}_r(a,b,X,g)$ if and only if $S= \{0,r+1,\to\} \setminus C$ is a numerical semigroup that fulfills the following conditions.
\begin{itemize}
	\item[] \hspace{-20pt} $(GS1)$ $\mathrm{g}(S)=r+g$.
	\item[] \hspace{-20pt} $(GS2)$ If $s \in S \setminus \{0\}$, then $as+b \in S^n$.
	\item[] \hspace{-20pt} $(GS3)$ $X \subseteq S$.
\end{itemize}

Let us denote by $\mathcal{N}_r(a,b,X)$ the set of all numerical semigroups which are subsets of $\{0,r+1,\to \}$ and satisfy the conditions $(GS2)$ and $(GS3)$. Let us observe that, with this notation, the solutions of $\mathrm{P}_r(a,b,X,g)$ are the elements of the set $\left\{ \{0,r+,1\to \}\setminus S \mid S \in \mathcal{N}_r(a,b,X) \mbox{ and } \mathrm{g}(S)= r+g \right\}$. Moreover, $\mathcal{N}_r(a,b,X)=\left\{S\in \mathcal{N}(a,b,X) \mid S \subseteq \{0,r+1,\to \} \right\}$.

The following proposition is analogous to Theorem~\ref{thm3}.

\begin{proposition}\label{propA}
	Let us take $M_r(a,b,X) = \bigcap_{S \in \mathcal{N}_r(a,b,X)} S$. Then the problem $\mathrm{P}_r(a,b,X,g)$ has solution if and only if the cardinality of $\mathbb{N} \setminus M_r(a,b,X)$ is greatest than or equal to $g+r$.
\end{proposition}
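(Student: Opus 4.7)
The plan is to imitate the proof of Theorem~\ref{thm3} almost verbatim, using the correspondence $C \leftrightarrow S = \{0,r+1,\to\} \setminus C$ already described just before the statement: a subset $C \subseteq \{r+1,\to\}$ of cardinality $g$ is a solution of $\mathrm{P}_r(a,b,X,g)$ if and only if the associated $S$ lies in $\mathcal{N}_r(a,b,X)$ and has genus $g+r$. Thus the problem is to show that such an $S$ exists if and only if $|\mathbb{N}\setminus M_r(a,b,X)|\geq g+r$.

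For the necessity, suppose $C$ is a solution. Then $S = \{0,r+1,\to\}\setminus C \in \mathcal{N}_r(a,b,X)$ and $\mathrm{g}(S)=g+r$. By the very definition of $M_r(a,b,X)$, we have $M_r(a,b,X) \subseteq S$, so $\mathbb{N}\setminus S \subseteq \mathbb{N}\setminus M_r(a,b,X)$ and hence $|\mathbb{N}\setminus M_r(a,b,X)| \geq \mathrm{g}(S) = g+r$.

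For the sufficiency, I would enumerate $\mathbb{N}\setminus M_r(a,b,X)=\{c_1<c_2<\cdots\}$. Since every member of $\mathcal{N}_r(a,b,X)$ is contained in $\{0,r+1,\to\}$, so is the intersection $M_r(a,b,X)$, which forces $\{1,\ldots,r\}\subseteq \mathbb{N}\setminus M_r(a,b,X)$ (in fact $c_i=i$ for $i=1,\ldots,r$). Define $S=M_r(a,b,X)\cup\{c_{g+r}+1,\to\}$ and set $C=\{0,r+1,\to\}\setminus S$. By construction $\mathbb{N}\setminus S=\{c_1,\ldots,c_{g+r}\}$, so $\mathrm{g}(S)=g+r$ and $|C|=g$. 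To finish one needs only to check $S\in\mathcal{N}_r(a,b,X)$: it is a submonoid (union of a submonoid with a sufficiently large half-line), it sits inside $\{0,r+1,\to\}$, it contains $X$ because $X\subseteq M_r(a,b,X)$, and it fulfils $(GS2)$ by a case split. For $s\in M_r(a,b,X)\setminus\{0\}$ the key is that $M_r(a,b,X)$ is itself an $(a,b)$-monoid (intersection of $(a,b)$-monoids, exactly as in Proposition~\ref{prop2}), so $as+b\in M_r(a,b,X)^n\subseteq S^n$; for $s\geq c_{g+r}+1$ one has $a_is+b_i>s\geq c_{g+r}+1$, hence $a_is+b_i\in\{c_{g+r}+1,\to\}\subseteq S$.

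I do not expect any real obstacle: the argument is a line-by-line transcription of Theorem~\ref{thm3} with $\{0,r+1,\to\}$ playing the role of $\mathbb{N}$ and the genus shifted by $r$. The only point requiring a moment of care is the observation that $\{1,\ldots,r\}$ is automatically absent from $M_r(a,b,X)$, which guarantees that $C$ lands inside $\{r+1,\to\}$ and that the accounting $|C|=g$ comes out correctly.
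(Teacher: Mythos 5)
Your proof is correct and follows essentially the same route as the paper: the same reduction via $C\leftrightarrow S=\{0,r+1,\to\}\setminus C$, the same containment argument for necessity, and the same construction $S=M_r(a,b,X)\cup\{c+1,\to\}$ for sufficiency (your $c_{g+r}$, indexed over $\mathbb{N}\setminus M_r(a,b,X)$, coincides with the paper's $c_g$ indexed over $\{0,r+1,\to\}\setminus M_r(a,b,X)$, since the first $r$ gaps are $1,\ldots,r$). You merely spell out the verification that $S\in\mathcal{N}_r(a,b,X)$, which the paper leaves as "easy to see".
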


\begin{proof}
	\emph{(Necessity.)} If $C$ is a solution of $\mathrm{P}_r(a,b,X,g)$, then we have that $S=\{0,r+1,\to \} \setminus C \in \mathcal{N}_r(a,b,X)$ and $\mathrm{g}(S)=g+r$. Since $M_r(a,b,X) \subseteq S$, then the cardinality of $\mathbb{N} \setminus M_r(a,b,X)$ is greatest than or equal to $g+r$.
	
	\emph{(Sufficiency.)} If $\{0,r+1,\to \} \setminus M_r(a,b,X)=\{c_1<\cdots<c_g<\cdots\}$ and $S= M_r(a,b,X) \cup \{c_g+1,\to\}$, then it is easy to see that $S \in \mathcal{N}_r(a,b,X)$ and $\mathrm{g}(S)=g+r$. Therefore, $C=\{0,r+1,\to \} \setminus S$ is a solution of $\mathrm{P}_r(a,b,X,g)$.
\end{proof}

Observe that the cardinality of $\mathbb{N} \setminus M_r(a,b,X)$ is greatest than or equal to $g+r$ if and only if the cardinality of $\{0,r+1,\to \} \setminus M_r(a,b,X)$ is greatest than or equal to $g$.

\begin{proposition}\label{propB}
	$M_r(a,b,X) = M(a,b,X)$.
\end{proposition}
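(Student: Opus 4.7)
The plan is to prove the two inclusions separately. The inclusion $M(a,b,X)\subseteq M_r(a,b,X)$ is immediate from the observation that $\mathcal{N}_r(a,b,X)\subseteq \mathcal{N}(a,b,X)$: intersecting over a smaller family yields a larger set. So the real work lies in showing $M_r(a,b,X)\subseteq M(a,b,X)$.

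For the reverse inclusion, it suffices to prove that for every $S\in \mathcal{N}(a,b,X)$ there is some $S'\in\mathcal{N}_r(a,b,X)$ with $S'\subseteq S$. Indeed, once we have such an $S'$, we get $M_r(a,b,X)=\bigcap_{T\in \mathcal{N}_r(a,b,X)} T\subseteq S'\subseteq S$, and then taking the intersection over all $S\in\mathcal{N}(a,b,X)$ yields $M_r(a,b,X)\subseteq M(a,b,X)$.

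Given $S\in\mathcal{N}(a,b,X)$, the natural candidate is $S'=S\cap \{0,r+1,\to\}$. I would verify each required property in turn. First, $S'$ is a submonoid of $(\mathbb{N},+)$ as the intersection of two submonoids of $(\mathbb{N},+)$, and $\mathbb{N}\setminus S'=(\mathbb{N}\setminus S)\cup\{1,\ldots,r\}$ is finite, so $S'$ is a numerical semigroup. Second, $X\subseteq S'$ because $X\subseteq S$ and $X\subseteq \{r+1,\to\}$ by the standing assumption on $X$. Third, and this is the only step that requires a small computation, I need $as+b\in (S')^n$ whenever $s\in S'\setminus\{0\}$. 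Such an $s$ lies in $S\setminus\{0\}$, so $a_i s+b_i\in S$; and since $s\geq r+1$, $a_i\geq 1$, $b_i\geq 1$, we obtain $a_is+b_i\geq (r+1)+1>r+1$, hence $a_is+b_i\in\{r+1,\to\}$. Thus $a_is+b_i\in S\cap\{r+1,\to\}\subseteq S'$. Finally, $S'\subseteq \{0,r+1,\to\}$ by construction, so $S'\in\mathcal{N}_r(a,b,X)$ with $S'\subseteq S$, as required.

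The main (and only) obstacle is verifying the $(a,b)$-condition for the truncated semigroup $S'$, but it dissolves immediately because the hypotheses force $a_is+b_i$ to jump strictly past $r+1$ whenever $s\geq r+1$. Everything else is a bookkeeping exercise about intersections.
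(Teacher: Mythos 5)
Your proof is correct and follows essentially the same route as the paper: both obtain $M_r(a,b,X)\subseteq M(a,b,X)$ by truncating each $S\in\mathcal{N}(a,b,X)$ to $S\cap\{0,r+1,\to\}$ and checking this lies in $\mathcal{N}_r(a,b,X)$. The only cosmetic difference is that the paper deduces membership from the Frobenius-variety closure under intersections together with $\{0,r+1,\to\}\in\mathcal{N}(a,b,X)$, while you verify the defining conditions directly; the key inequality $a_is+b_i\geq r+2$ is the same in both arguments.
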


\begin{proof}
	Since $\mathcal{N}_r(a,b,X) \subseteq \mathcal{N}(a,b,X)$, then $M(a,b,X) = \bigcap_{S \in \mathcal{N}(a,b,X)} S \subseteq \bigcap_{S \in \mathcal{N}_r(a,b,X)} S = M_r(a,b,X)$. Let us now see the other inclusion.
	
	Since $\{0,r+1,\to \} \in \mathcal{N}(a,b,X)$ and $\mathcal{N}(a,b,X)$ is a Frobenius variety, we have that, if $S \in \mathcal{N}(a,b,X)$, then $S \cap \{0,r+1,\to \} \in \mathcal{N}(a,b,X)$. In addition, $S \cap \{0,r+1,\to \} \subseteq \{0,r+1,\to \}$ and, therefore, $S \cap \{0,r+1,\to \} \in \mathcal{N}_r(a,b,X)$. In this way, $\mathcal{R}=\left\{S \cap \{0,r+1,\to \} \mid S \in \mathcal{N}(a,b,X) \right\} \subseteq \mathcal{N}_r(a,b,X)$. Consequently, $M_r(a,b,X) = \bigcap_{S \in \mathcal{N}_r(a,b,X)} S \subseteq \bigcap_{S \in \mathcal{R}} S = \bigcap_{S \in \mathcal{N}(a,b,X)} S = M(a,b,X)$.
\end{proof}

As an immediate consequence of Proposition~\ref{propB} and Proposition~\ref{prop10}, we have the next result.

\begin{corollary}\label{corC}
	If $\gcd\left(X \cup \{b_1,\ldots,b_n\}\right)=d$, then we get that $M_r(a,b,X) = d\cdot M\left(a,\frac{b}{d},\frac{X}{d}\right)$.
\end{corollary}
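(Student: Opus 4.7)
The plan is to chain together the two results the corollary explicitly cites. By Proposition~\ref{propB}, we already know that $M_r(a,b,X)$ coincides with $M(a,b,X)$, regardless of the value of $r$; this lets us eliminate the subscript $r$ from the problem entirely. Once that identification is made, the statement reduces to a claim about $M(a,b,X)$ alone, which is precisely what Proposition~\ref{prop10} addresses.

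Concretely, the proof would be a two-line argument. First I would invoke Proposition~\ref{propB} to rewrite the left-hand side as
\[
M_r(a,b,X) = M(a,b,X).
\]
Then, under the hypothesis $\gcd(X \cup \{b_1,\ldots,b_n\}) = d$, Proposition~\ref{prop10} gives
\[
M(a,b,X) = d \cdot M\!\left(a,\tfrac{b}{d},\tfrac{X}{d}\right),
\]
and composing the two equalities yields exactly the claim.

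There is essentially no obstacle here: the content has already been absorbed into Proposition~\ref{propB} (which does the work of identifying the families $\mathcal{N}_r(a,b,X)$ and $\mathcal{N}(a,b,X)$ at the level of their intersection) and into Proposition~\ref{prop10} (which does the arithmetic reduction by the greatest common divisor). The only thing to be careful about is that the hypothesis $\gcd(X\cup\{b_1,\ldots,b_n\})=d$ is transported verbatim into the hypothesis required by Proposition~\ref{prop10}, which it is, so no additional verification is needed.
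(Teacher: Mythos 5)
Your proposal is correct and is exactly the argument the paper intends: the corollary is stated there as an immediate consequence of Proposition~\ref{propB} (which gives $M_r(a,b,X)=M(a,b,X)$) and Proposition~\ref{prop10} (which gives $M(a,b,X)=d\cdot M\left(a,\frac{b}{d},\frac{X}{d}\right)$ under the stated gcd hypothesis). Nothing further is needed.
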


Let us observe that, as a consequence fo the previous corollary, we can use Algorithm~\ref{alg11} in order to compute $M_r(a,b,X)$.

The following result is the analogous to Theorem~\ref{thm6} for the current problem.

\begin{corollary}\label{corD}
	The following conditions are equivalent.
	\begin{enumerate}
		\item[\textit{1.}] $\mathcal{N}_r(a,b,X)$ is finite.
		\item[\textit{2.}] $M_r(a,b,X)$ is a numerical semigroup.
		\item[\textit{3.}] $\gcd(X\cup \{b_1,\ldots,b_n\})=1$.
	\end{enumerate}
\end{corollary}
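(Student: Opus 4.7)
The plan is to piggy-back on Theorem~\ref{thm6} via Proposition~\ref{propB}. Since $M_r(a,b,X)=M(a,b,X)$, the equivalence of conditions \textit{2} and \textit{3} is immediate: $M_r(a,b,X)$ is a numerical semigroup if and only if $M(a,b,X)$ is, and Theorem~\ref{thm6} (equivalently Lemma~\ref{lem4} together with Proposition~\ref{prop5}) already characterizes this by $\gcd(X\cup\{b_1,\ldots,b_n\})=1$.

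For \textit{1} $\Rightarrow$ \textit{2}, I would simply observe that $\mathcal{N}_r(a,b,X)$ consists of numerical semigroups, so if this family is finite, then $M_r(a,b,X)$ is a finite intersection of numerical semigroups, hence itself a numerical semigroup.

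For \textit{2} $\Rightarrow$ \textit{1}, I would mimic the argument used in Theorem~\ref{thm6}: every $S\in\mathcal{N}_r(a,b,X)$ satisfies $M_r(a,b,X)\subseteq S\subseteq\{0,r+1,\to\}$, so $S=M_r(a,b,X)\cup Y$ with $Y\subseteq\{0,r+1,\to\}\setminus M_r(a,b,X)$. When $M_r(a,b,X)$ is a numerical semigroup, its complement in $\mathbb{N}$ (and therefore in $\{0,r+1,\to\}$) is finite, so there are only finitely many possible $Y$, and hence only finitely many possible $S$. Alternatively, one can note $\mathcal{N}_r(a,b,X)\subseteq\mathcal{N}(a,b,X)$ and invoke Theorem~\ref{thm6} directly, using Proposition~\ref{propB} to translate \textit{2} for $M_r$ into \textit{2} for $M$.

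There is no genuine obstacle here: every implication is a direct consequence of material already proved. The only point that requires a moment of care is to make sure the reduction to Theorem~\ref{thm6} is done via the identity $M_r(a,b,X)=M(a,b,X)$ rather than via any separate computation, so that the $\gcd$ condition carries over verbatim from the unrestricted setting to the generalized one.
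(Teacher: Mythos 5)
Your proposal is correct and follows essentially the same route as the paper: the equivalence of \textit{2} and \textit{3} via Proposition~\ref{propB} and Theorem~\ref{thm6}, the implication \textit{1}~$\Rightarrow$~\textit{2} by finite intersections of numerical semigroups, and \textit{2}~$\Rightarrow$~\textit{1} by writing each $S\in\mathcal{N}_r(a,b,X)$ as $M_r(a,b,X)\cup Y$ with $Y$ contained in the finite complement. The alternative you mention for \textit{2}~$\Rightarrow$~\textit{1} (deducing finiteness of $\mathcal{N}(a,b,X)$ from Theorem~\ref{thm6} and then restricting to the subfamily $\mathcal{N}_r(a,b,X)$) is equally valid but not needed.
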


\begin{proof}
	The equivalence between conditions \textit{2} and \textit{3} is consequence of Proposition~\ref{propB} and Theorem~\ref{thm6}. Now, let us see the equivalence between conditions \textit{1} and \textit{2}.
	
	$\textit{(1.} \Rightarrow \textit{2.)}$ It is enough to observe that the finite intersection of numerical semigroups is another numerical semigroup.
	
	$\textit{(2.} \Rightarrow \textit{1.)}$ If $S \in \mathcal{N}_r(a,b,X)$, then $M_r(a,b,X) \subseteq S$. Thus, $S=M_r(a,b,X) \cup Y$ for some $Y\subseteq \mathbb{N} \setminus M_r(a,b,X)$. Since $\mathbb{N} \setminus M_r(a,b,X)$ is finite, then we can conclude that $\mathcal{N}_r(a,b,X)$ is finite.
\end{proof}

Let us illustrate the previous results with several examples.

\begin{example}\label{exmpE1}
	Let us see that $\mathrm{P}_r\left( (1,2), (4,1), \{5\}, 6 \right)$ has solution if and only if $r\in\{0,1,2\}$. Since $\{5\} \subseteq \{r+1,\to \}$, then we have that $r\in\{0,1,2,3,4,\}$. From Proposition~\ref{propB} and Example~\ref{exmp12}, we have that $M=M_r\left( (1,2), (4,1), \{5\}, 6 \right) = \langle \{5,9,11,13,17\} \rangle$. Since ${\mathbb N} \setminus M = \{1,2,3,4,6,7,8,12\}$ has cardinality equal to $8$, by applying Proposition~\ref{propA}, we easily deduce that $\mathrm{P}_r\left( (1,2), (4,1), \{5\}, 6 \right)$ has solution if and only if $r\in\{0,1,2\}$.
\end{example}

\begin{example}\label{exmpE2}
	If $r \in \{0,1,2,3,4,5\}$, then we have that $\mathcal{N}_r\left( (2,3), (4,2), \{6,8\} \right)$ is an infinite set. In fact, this is an immediate consequence of Corollary~\ref{corD} and that $\gcd(\{4,2,6,8\})=2\not=1$.
\end{example}

\begin{example}\label{exmpE3}
	Let us compute $\mathrm{P}_3\left( (2,3), (4,2), \{6,8\}, 9 \right)$. From Proposition~\ref{propB} and Example~\ref{exmp13} we have that $M_3\left( (2,3), (4,2), \{6,8\} \right) = \langle \{6,8\} \rangle$. Now, if we apply the construction given in the sufficiency of Proposition~\ref{propA}, we have that $\{4,5,7,9,10,11,13,15,17\}$  is a solution.
\end{example}

If $G$ is a tree and $u,v$ are two vertices of $G$ such that there exists a path between them, then we will say that $u$ is a \emph{descendant} of $v$. The next result has an easy proof.

\begin{proposition}\label{propF}
	$\mathcal{N}_r(a,b,X)$ is the set of all descendants of $\{0,r+1,\to \}$ in the tree $G(\mathcal{N}(a,b,X))$.
\end{proposition}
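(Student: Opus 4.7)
The plan is to exploit the identity
\[
\mathcal{N}_r(a,b,X)=\{S\in \mathcal{N}(a,b,X)\mid S\subseteq v\}\qquad\text{with}\quad v:=\{0,r+1,\to\},
\]
which is recorded explicitly just before the statement, and to identify its right-hand side with the set of descendants of $v$ in $\mathrm{G}(\mathcal{N}(a,b,X))$. As a preliminary step I would verify that $v$ is itself a vertex of the tree: $X\subseteq\{r+1,\to\}\subseteq v$, and for any $s\in v\setminus\{0\}$ and any $i\in\{1,\ldots,n\}$ one has $a_is+b_i\ge(r+1)+1>r$, so $a_is+b_i\in v$; hence $v\in\mathcal{N}(a,b,X)$. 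The case $r=0$ is immediate, since then $v=\mathbb{N}$ is the root and ``descendant of $v$'' just means ``vertex'', while $\mathcal{N}_0(a,b,X)=\mathcal{N}(a,b,X)$; I assume $r\ge 1$ below.

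The easy direction is that every descendant of $v$ belongs to $\mathcal{N}_r(a,b,X)$. Indeed, an edge $(S,S')$ is defined by $S'=S\cup\{\mathrm{F}(S)\}$, so ascending the tree only enlarges the underlying semigroup; the unique path from a descendant $S$ up to $v$ therefore yields a chain $S\subseteq S_1\subseteq\cdots\subseteq v$, whence $S\subseteq v$. Since every vertex of the tree lies in $\mathcal{N}(a,b,X)$, this gives $S\in\mathcal{N}_r(a,b,X)$.

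For the converse I would take $S\in\mathcal{N}_r(a,b,X)$ and induct on the cardinality of $v\setminus S$ to prove that the ascending path from $S$ actually passes through $v$. The engine is the observation that every numerical semigroup $T$ with $T\subsetneq v$ satisfies $\mathrm{F}(T)\ge r+1$: any $k\in v\setminus T$ is a gap of $T$ with $k\ge r+1$, forcing $\mathrm{F}(T)\ge k\ge r+1$. Consequently $T\cup\{\mathrm{F}(T)\}\subseteq v$ and the cardinality of $v\setminus(T\cup\{\mathrm{F}(T)\})$ is one less than that of $v\setminus T$; thus one step up the tree stays inside $v$ and strictly decreases the induction parameter. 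After finitely many steps the parameter hits $0$, i.e.\ the path reaches $v$, so $S$ is a descendant of $v$.

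The only non-routine ingredient is the inequality $\mathrm{F}(T)\ge r+1$ for $T\subsetneq v$, which is a one-line consequence of the definition of the Frobenius number, so I do not anticipate any genuine obstacle; the remainder of the argument is definition-chasing using the shape of the edges in $\mathrm{G}(\mathcal{N}(a,b,X))$.
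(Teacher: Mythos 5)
Your proof is correct and complete; the paper itself omits the argument (it only remarks that ``the next result has an easy proof''), and what you supply is exactly the intended reasoning: the recorded identity $\mathcal{N}_r(a,b,X)=\{S\in\mathcal{N}(a,b,X)\mid S\subseteq\{0,r+1,\to\}\}$, monotonicity of the edges $S\mapsto S\cup\{\mathrm{F}(S)\}$ for one inclusion, and the key one-line observation that $\mathrm{F}(T)\ge r+1$ whenever $T\subsetneq\{0,r+1,\to\}$, which drives the finite induction on $\left|\{0,r+1,\to\}\setminus S\right|$ for the other.
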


A Frobenius pseudo-variety (see \cite{pseudovar}) is a non-empty family $\mathcal{P}$ of numerical semigroups that fulfills the following conditions. 
\begin{enumerate}
	\item[] \hspace{-20pt} $(PV1)$ $\mathcal{P}$ has a maximum element $\max(\mathcal{P})$ (with respect to the inclusion order).
	\item[] \hspace{-20pt} $(PV2)$ If $S,T \in \mathcal{P}$, then $S\cap T \in \mathcal{P}$.
	\item[] \hspace{-20pt} $(PV3)$ If $S \in \mathcal{P}$ and $S \not= \max(\mathcal{P})$, then $S \cup \mathrm{F}(S) \in \mathcal{P}$.
\end{enumerate}
As an immediate consequence of Proposition~\ref{propF} and the comment above to Example 7 in \cite{pseudovar}, we have the following result.

\begin{proposition}\label{propG}
	$\mathcal{N}_r(a,b,X)$ is a Frobenius pseudo-variety.
\end{proposition}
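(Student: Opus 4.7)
The plan is to verify the three axioms $(PV1)$, $(PV2)$, $(PV3)$ directly, using the fact (already established in Proposition~\ref{prop1}) that $\mathcal{N}(a,b,X)$ is a Frobenius variety, together with the characterization $\mathcal{N}_r(a,b,X)=\{S\in \mathcal{N}(a,b,X) \mid S\subseteq \{0,r+1,\to\}\}$ given just before Proposition~\ref{propA}. The guiding intuition, matching the hint in the paper, is that $\mathcal{N}_r(a,b,X)$ is precisely the set of descendants of the vertex $\{0,r+1,\to\}$ in the tree $G(\mathcal{N}(a,b,X))$ (Proposition~\ref{propF}), and descendant sets in a variety tree are always pseudo-varieties.

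First I would exhibit the maximum. The natural candidate is $T:=\{0,r+1,\to\}$. It is clearly a numerical semigroup, it contains $X$ (since $X\subseteq \{r+1,\to\}$), and for any $s\in T\setminus\{0\}$ one has $s\geq r+1$, so $a_is+b_i\geq r+2>r+1$ and hence $a_is+b_i\in T$ for every $i$; thus $T\in\mathcal{N}_r(a,b,X)$. Any other $S\in\mathcal{N}_r(a,b,X)$ satisfies $S\subseteq T$ by definition, so $T=\max(\mathcal{N}_r(a,b,X))$, verifying $(PV1)$.

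For $(PV2)$, if $S,S'\in \mathcal{N}_r(a,b,X)$, then $S\cap S'\in \mathcal{N}(a,b,X)$ by Proposition~\ref{prop1}, and $S\cap S'\subseteq T$, so $S\cap S'\in \mathcal{N}_r(a,b,X)$. For $(PV3)$, suppose $S\in \mathcal{N}_r(a,b,X)$ with $S\neq T$. Since $S\subseteq T$ and $S\neq T$, we have $\mathrm{F}(S)\geq r+1$, so $S\cup\{\mathrm{F}(S)\}\subseteq T$; and by the Frobenius variety property of $\mathcal{N}(a,b,X)$ we already have $S\cup\{\mathrm{F}(S)\}\in \mathcal{N}(a,b,X)$. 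Combining both facts yields $S\cup\{\mathrm{F}(S)\}\in \mathcal{N}_r(a,b,X)$.

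I do not expect a real obstacle: the only subtlety is confirming that $T=\{0,r+1,\to\}$ actually lies in $\mathcal{N}_r(a,b,X)$ (so that a maximum exists), and this is a one-line check using $a_i,b_i\geq 1$. Everything else is inherited from the variety structure of $\mathcal{N}(a,b,X)$ by restricting to the down-set $\{S\subseteq T\}$, which is stable under intersection and under the operation $S\mapsto S\cup\{\mathrm{F}(S)\}$ as long as $S\neq T$.
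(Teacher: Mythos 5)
Your proof is correct, and it is more self-contained than the paper's. The paper disposes of this proposition in one line: it combines Proposition~\ref{propF} (that $\mathcal{N}_r(a,b,X)$ is the set of descendants of $\{0,r+1,\to\}$ in the tree $\mathrm{G}(\mathcal{N}(a,b,X))$) with a general fact quoted from the reference on Frobenius pseudo-varieties, namely that the descendants of any fixed vertex in the tree of a Frobenius variety form a pseudo-variety. You instead verify $(PV1)$--$(PV3)$ directly from the characterization $\mathcal{N}_r(a,b,X)=\{S\in\mathcal{N}(a,b,X)\mid S\subseteq\{0,r+1,\to\}\}$, which amounts to unfolding that cited general fact in this particular case. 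All three verifications are sound: the check that $T=\{0,r+1,\to\}$ itself belongs to $\mathcal{N}_r(a,b,X)$ uses $a_i,b_i\geq 1$ exactly as you say; closure under intersection is inherited from $(V1)$ together with $S\cap S'\subseteq T$; and for $(PV3)$ the key observation that $S\subsetneq T$ forces $\mathrm{F}(S)\geq r+1$ (so that adjoining $\mathrm{F}(S)$ stays inside $T$) is correct, since $T\setminus S$ is a non-empty set of integers $\geq r+1$. The only point you leave implicit is that $S\neq\mathbb{N}$ before invoking $(V2)$, but this is immediate ($S\subsetneq T\subseteq\mathbb{N}$). What your route buys is independence from the external reference; what the paper's route buys is brevity and the reusable structural statement that descendant sets in a variety tree are always pseudo-varieties.
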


Let us observe that, if $r\geq1$, then $\max\left(\mathcal{N}_r(a,b,X)\right) = \{0,r+1,\to \} \not= {\mathbb N}$. Therefore, by applying \cite[Proposition 1]{pseudovar}, we have that $\mathcal{N}_r(a,b,X)$ is not a Frobenius variety.

Now, let us notice that the subgraph, of a tree, which is formed by a vertex and all its descendants is also a tree. We will denote by $\mathrm{G}\left(\mathcal{N}_r(a,b,X)\right)$ the subtree of $\mathrm{G}\left(\mathcal{N}(a,b,X)\right)$ formed by $\{0,r+1,\to \}$ and all its descendants.

\begin{example}\label{exmpH}
	Since the root of the tree $\mathrm{G}(\mathcal{N}_3((1,2),(4,1),\{5\}))$ is $\{0,4,\to\}$ $= \langle \{4,5,6,7\} \rangle$, from Example~\ref{exmp17} we have that such a tree is the following one.
	\begin{center}
		\begin{picture}(315,160)
		\put(152,150){$\langle \{4,5,6,7\} \rangle$}
		\put(160,145){\line(-4,-3){20}} \put(178,145){\line(1,-1){15}} \put(198,145){\line(4,-1){60}}
		\put(90,120){$\langle \{5,6,7,8,9\} \rangle$} \put(187,120){$\langle \{4,5,7\} \rangle$} \put(253,120){$\langle \{4,5,6\} \rangle$}
		\put(102,115){\line(-2,-3){10}}  \put(123,115){\line(2,-3){10}} \put(140,115){\line(3,-1){45}} \put(221,115){\line(3,-1){45}}
		\put(50,90){$\langle \{5,7,8,9,11\} \rangle$} \put(120,90){$\langle \{5,6,8,9\} \rangle$} \put(175,90){$\langle \{5,6,7,9\} \rangle$} 
		\put(260,90){$\langle \{4,5,11\} \rangle$}
		\put(72,85){\line(-2,-3){10}} \put(90,85){\line(2,-3){10}} \put(155,85){\line(5,-3){25}}
		\put(9,60){$\langle \{5,8,9,11,12\} \rangle$} \put(83,60){$\langle \{5,7,9,11,13\} \rangle$} \put(165,60){$\langle \{5,6,9,13\} \rangle$}
		\put(42,55){\line(0,-1){15}}
		\put(8,30){$\langle \{5,9,11,12,13\} \rangle$}
		\put(42,25){\line(0,-1){15}}
		\put(8,0){$\langle \{5,9,11,13,17\} \rangle$}
		\end{picture}
	\end{center}
 \end{example}

Let us observe that $\mathcal{N}_r(a,b,X)$ is the set of vertices in $\mathrm{G}\left(\mathcal{N}_r(a,b,X)\right)$, and that $(S,S') \in \mathcal{N}_r(a,b,X) \times \mathcal{N}_r(a,b,X)$ is an edge of $\mathrm{G}\left(\mathcal{N}_r(a,b,X)\right)$ if and only if $S'=S\cup\{\mathrm{F}(S)\}$. It is also clear that, if $S \in \mathcal{N}_r(a,b,X)$, then the set formed by the children of $S$ in $\mathcal{N}_r(a,b,X)$ is the same that the set formed by the children of $S$ in $\mathcal{N}(a,b,X)$. In this way, by applying Theorem~\ref{thm14}, we have the next result.

\begin{proposition}\label{propI}
	The graph $\mathrm{G}\left(\mathcal{N}_r(a,b,X)\right)$ is a tree with root $\{0,r+1,\to \}$. Moreover, the set of children of a vertex $S$ in $\mathrm{G}\left(\mathcal{N}_r(a,b,X)\right)$ is
	$$\left\{ S \setminus \{m\} \mid m \in \mathrm{msg}(S), \; m > \mathrm{F}(S), \; \mbox{and} \; S \setminus \{m\} \in \mathcal{N}(a,b,X) \right\}.$$
\end{proposition}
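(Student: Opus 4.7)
The plan is to reduce everything to the structure of the ambient tree $\mathrm{G}(\mathcal{N}(a,b,X))$ already constructed in Section 3 (Theorem~\ref{thm14}), using Proposition~\ref{propF} as the bridge.

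First, I would establish that $\mathrm{G}(\mathcal{N}_r(a,b,X))$ is a tree rooted at $\{0,r+1,\to\}$. By Proposition~\ref{propF}, the vertex set $\mathcal{N}_r(a,b,X)$ coincides with $\{0,r+1,\to\}$ together with its descendants in the tree $\mathrm{G}(\mathcal{N}(a,b,X))$, and by the definition given just before the proposition, the edges of $\mathrm{G}(\mathcal{N}_r(a,b,X))$ are exactly the edges of $\mathrm{G}(\mathcal{N}(a,b,X))$ between these vertices. A general fact about trees is that, if we fix a vertex $v$ of a tree $T$, then the subgraph on $\{v\} \cup \{\text{descendants of } v\}$ with the inherited edges is itself a tree rooted at $v$: every such descendant is connected to $v$ through the unique path from the descendant back up to $v$ in $T$, which lies entirely inside the subgraph. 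Applying this with $v = \{0,r+1,\to\}$ yields the tree structure.

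Second, I would identify the children of an arbitrary vertex $S \in \mathcal{N}_r(a,b,X)$. By definition, $(S,S') \in \mathcal{N}_r(a,b,X) \times \mathcal{N}_r(a,b,X)$ is an edge of $\mathrm{G}(\mathcal{N}_r(a,b,X))$ if and only if it is an edge of $\mathrm{G}(\mathcal{N}(a,b,X))$, so the children of $S$ in the subtree are precisely those children of $S$ in the ambient tree which happen to lie in $\mathcal{N}_r(a,b,X)$. By Theorem~\ref{thm14}, the children of $S$ in $\mathrm{G}(\mathcal{N}(a,b,X))$ are the semigroups $T = S \setminus \{m\}$ with $m \in \mathrm{msg}(S)$, $m > \mathrm{F}(S)$ and $T \in \mathcal{N}(a,b,X)$. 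The key (and only) remaining observation is that any such $T$ automatically satisfies $T \subseteq S \subseteq \{0,r+1,\to\}$, because $S \in \mathcal{N}_r(a,b,X)$ means $S \subseteq \{0,r+1,\to\}$. Therefore $T \in \mathcal{N}_r(a,b,X)$, and no condition needs to be added beyond $T \in \mathcal{N}(a,b,X)$. This gives exactly the set in the statement.

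There is essentially no obstacle here; the only thing that could cause trouble is justifying the general tree-theoretic fact that descendants of a vertex form a subtree, but this is immediate from the uniqueness of paths in a tree. The whole point is that membership in $\mathcal{N}_r(a,b,X)$ is downward-stable under the child relation of the ambient tree, so restricting to $\{0,r+1,\to\}$ and its descendants changes neither the tree structure nor the local description of children given by Theorem~\ref{thm14}.
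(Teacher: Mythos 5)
Your proof is correct and follows essentially the same route as the paper: the paper also obtains the result by combining the general fact that a vertex together with its descendants forms a subtree, Proposition~\ref{propF} to identify the vertex set, the observation that any child $S\setminus\{m\}$ of $S\subseteq\{0,r+1,\to\}$ is again contained in $\{0,r+1,\to\}$ (so the children of $S$ in $\mathcal{N}_r(a,b,X)$ and in $\mathcal{N}(a,b,X)$ coincide), and Theorem~\ref{thm14}. No changes needed.
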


Now, let us notice that, by using Propositions~\ref{prop15} and \ref{prop16}, we can compute the children of any vertex $S$ in $\mathrm{G}\left(\mathcal{N}_r(a,b,X)\right)$ and, consequently, we have an algorithmic process to recurrently build the elements of $\mathcal{N}_r(a,b,X)$.

We finish this section with an illustrative example about the above comment.

\begin{example}\label{exmpJ}
	Let us compute all the solutions of $\mathrm{P}_3((2,3),(4,2),\{6,8\},4)$. In order to do this, we have to determine the vertices of $\mathrm{G}(\mathcal{N}_3((2,3),(4,2),\{6,8\}))$ which are connected to $\{0,4,\to\} = \langle \{4,5,6,7\} \rangle$ through a path of length $4$. 
	
	Let us observe that, if $A$ is the set of vertices (of a tree) which are connected to the root through a path of length $k$, then the set formed by all vertices that are children of some vertex of $A$ is just the set of vertices which are connected to the root through a path of length $k+1$. Thus, if we denote by $A_i$ the set formed by the vertices of $\mathrm{G}(\mathcal{N}_3((2,3),(4,2),\{6,8\}))$ which are connected to $\langle \{4,5,6,7\} \rangle$ through a path of length $i$, then (by applying Propositions~\ref{propI}, \ref{prop15}, and \ref{prop16}) we obtain recurrently the following sets.
	\begin{itemize}
		\item $A_0 = \{ \langle \{4,5,6,7\} \rangle \}$
		\item $A_1 = \{ \langle \{5,6,7,8,9\} \rangle, \langle \{4,6,7,9\} \rangle, \langle \{4,5,6\} \rangle \}$
		\item $A_2 =\{ \langle \{6,7,8,9,10,11\} \rangle, \langle \{5,6,8,9\} \rangle, \langle \{5,6,7,8\} \rangle, \langle \{4,6,9,11\} \rangle, \\ \mbox{ } \hspace{9.5mm} \langle \{4,6,7\} \rangle \}$
		\item $A_3 =\{ \langle \{6,8,9,10,11,13\} \rangle, \langle \{6,7,8,10,11\} \rangle, \langle \{6,7,8,9,11\} \rangle, \\ \mbox{ } \hspace{9.5mm} \langle \{6,7,8,9,10\} \rangle, \langle \{5,6,8\} \rangle, \langle \{4,6,11,13\} \rangle, \langle \{4,6,9\} \rangle \}$
		\item $A_4 =\{ \langle \{6,8,10,11,13,15\} \rangle, \langle \{6,8,9,11,13\} \rangle, \langle \{6,8,9,10,13\} \rangle, \\ \mbox{ } \hspace{9.5mm} \langle \{6,8,9,10,11\} \rangle, \langle \{6,7,8,11\} \rangle, \langle \{6,7,8,10\} \rangle, \langle \{6,7,8,9\} \rangle, \\ \mbox{ } \hspace{9.5mm} \langle \{4,6,13,15\} \rangle, \langle \{4,6,11\} \rangle \}$
	\end{itemize}
	Therefore, the set of solutions of $\mathrm{P}_3((2,3),(4,2),\{6,8\},4)$ is
	$$\left\{ \langle \{4,5,6,7\} \rangle \setminus S \mid S \in A_4 \right\} = \left\{ \{4,5,7,9\}, \{4,5,7,10\}, \{4,5,7,11\}, \{4,5,7,13\}, \right.$$
	$$\left. \{4,5,9,10\}, \{4,5,9,11\}, \{4,5,10,11\}, \{5,7,9,11\}, \{5,7,9,13\} \right\}.$$

\end{example}


\end{document}